\DeclareMathOperator{\A}{A}
\DeclareMathOperator{\D}{D}
\DeclareMathOperator{\R}{R}
\newcommand\down{\mathord\downarrow}
\DeclareMathOperator{\minusset}{/}
\newcommand \compo {\mathbin{;}}
\newcommand \bmeet {\cdot}
\newcommand \bjoin {+}
\newcommand \limplies {\mathbin{\rightarrow}}
\newcommand \aand \wedge
\newcommand{\join}{\sum}
\DeclareMathOperator{\kstar}{*}
\DeclareMathOperator{\inv}{^{-1}}
\newcommand{\from}{\colon}
\newcommand{\interp}[1]{\llbracket#1\rrbracket}
\newcommand{\tinterp}[1]{[#1]}
\newcommand{\algebra}[1]{\mathfrak{#1}}
\newcommand{\defn}[1]{\textbf{#1}}
\newcommand\trivial{\vec \varepsilon}
\newcommand\mynobreakpar{\par\nobreak\@afterheading}
\renewcommand{\vec}[1]{\boldsymbol{#1}}
\renewcommand{\>}{\rangle}
\theoremstyle{plain}
\newtheorem{theorem}{Theorem}[section]
\newtheorem{proposition}[theorem]{Proposition}
\newtheorem{corollary}[theorem]{Corollary}
\newtheorem{lemma}[theorem]{Lemma}
\theoremstyle{definition}
\newtheorem{definition}[theorem]{Definition}
\newtheorem{terminology}[theorem]{Terminology}
\newtheorem{notation}[theorem]{Notation}
\theoremstyle{remark}
\newtheorem{remark}[theorem]{Remark}
\newtheorem{problem}[theorem]{Problem}
\newtheorem{example}[theorem]{Example}
\numberwithin{equation}{section}
\begin{document}
\runningtitle{Free Kleene algebras with domain}
\title{Free Kleene algebras with domain}
\author[1]{Brett McLean}
\address[1]{Laboratoire J. A. Dieudonn\'e UMR CNRS 7351, Universit\'e Nice Sophia Antipolis, 06108 Nice Cedex 02\email{brett.mclean@unice.fr}}

\authorheadline{Brett McLean}


\support{This project has received funding from the European Research Council (ERC) under the European Union's
Horizon 2020 research and innovation program (grant agreement No. 670624).}


\classification{primary 08B20; secondary 20M20}
\keywords{Kleene algebra, domain, binary relation, equational theory, decidable}

\begin{abstract}
First we identify the free algebras of the class of algebras of binary relations equipped with the composition and domain operations. Elements of the free algebras are pointed labelled finite rooted trees. Then we extend to the analogous case when the signature includes all the Kleene algebra with domain operations; that is, we add union and reflexive transitive closure to the signature. In this second case, elements of the free algebras are `regular' sets of the trees of the first case. As a corollary, the axioms of domain semirings provide a finite quasiequational axiomatisation of the equational theory of algebras of binary relations for the intermediate signature of composition, union, and domain. Next we note that our regular sets of trees are not closed under complement, but prove that they are closed under intersection. Finally, we prove that under relational semantics the equational validities of Kleene algebras with domain form a decidable set.
\end{abstract}

\maketitle

\section{Introduction}

Reasoning about binary relations, and ways of combining them, has an extensive literature and a multitude of applications. Classically, in algebraic logic, binary relations model logical formulas with two free variables \cite{Tarski1941}. In computer science, we can find binary relations modelling the actions of programs \cite{pratt1976semantical, kozen1997kleene}, and elsewhere representing relationships between items of data that compose a tree \cite{Benedikt:2009:XL:1456650.1456653}, or a graph \cite{libkin2013querying}.

When an algebraic logician thinks of binary relations, the first signature to come to mind will always be that of Tarski's relation algebras. In computer science, the Kleene algebra signature has the greatest prominence. In the latter case, the operations are relational composition, union, and reflexive transitive closure, as well as constants for the empty relation and the identity relation.

Any set of binary relations closed under the five Kleene algebra operations/\allowbreak const\-ants can be viewed as an algebra in the sense of universal algebra/model theory, that is, a structure over a signature of function symbols (but no predicate symbols). It is well known that this class of algebras contains its free algebras, and that the free algebra generated by a given finite set $\Sigma$ is precisely the set of all regular languages over the alphabet $\Sigma$. The importance of regular languages in theoretical computer science goes almost without saying \cite{regularlanguages}. The algebraic perspective on sets of regular languages was employed to great effect by Eilenberg in his celebrated variety theorem \cite{Eilenberg:1974:ALM:540337}, and continues to yield valuable new insights to this day \cite{gehrke2008duality, GEHRKE20162711}.

In this paper, we identify the analogous free algebras in the case where the signature is expanded with one extra operation on binary relations: the unary \emph{domain} operation
\[\D(R) = \{(x, x)  \mid \exists y  : (x, y) \in R\}\text{,}\]
which provides a record of all points having at least one image under the given relation. This expanded signature is that of \emph{Kleene algebra with domain}, a certain finite set of algebraic laws extending Kozen's theory of Kleene algebras with a domain operation and a small number of associated equations \cite{desharnais2011internal}. 

One intended model for this theory is indeed algebras of binary relations, and there is a hope that the theory will prove useful for reasoning about the actions of nondeterministic computer programs \cite{desharnais2006kleene, DesharnaisS08}. In this programs-as-relations formalism, a program $P$ is modelled by a relation $R$ on machine states, with $R$ relating state $x$ to state $y$ precisely if,  whenever the machine is in state $x$ and $P$ is executed, $y$ is a possible resultant state. Hence relational composition models sequential composition of programs, union models nondeterministic choice, and reflexive transitive closure models the choice to iteratively execute a program any (finite) number of times. The expression $\D(R)$ then models a program that when run from certain states---those from which $P$ would terminate---has no effect---and otherwise fails/does not terminate. Thus domain makes provision within the syntax for expressing certain types of `tests'---an important component of all programming languages.

In addition to identifying the free algebras, we also show that it is decidable whether an equation in this Kleene algebra with domain signature is valid over all algebras of binary relations. Of course, when reasoning about programs, the validity of an equation $s = t$ corresponds to the programs expressed by $s$ and $t$ always having precisely the same effect (independently of which ground programs the variables are instantiated with).

\subsubsection*{Structure of the paper}

 In \Cref{preliminaries} we give the necessary definitions and some context regarding algebras of binary relations, and their free algebras.

 In \Cref{trees} we introduce the trees that we use for describing our free algebras, and certain relations and operations on those trees.

In \Cref{singletons} we prove an intermediate result: we identify the free algebras for the reduced signature that omits the `nondeterministic' union and reflexive transitive closure operators, and also the empty relation constant, that is, the signature with the composition and domain operations and the identity constant. For this signature the elements of the free algebras are `reduced' pointed labelled finite rooted trees (\Cref{main0}).

In \Cref{sets} we extend the result of the previous section to identify the free algebras for the full Kleene algebra with domain signature. In this case, elements of the free algebras are certain sets of the trees of the previous case (\Cref{main}). We term these sets `regular' sets of trees, by analogy with the regular languages of the Kleene algebra signature. By combining with an existing result, it follows as a corollary that the axioms of domain semirings provide a finite quasiequational axiomatisation of the equational theory of algebras of binary relations for the signature of composition, union, domain, and the two constants (but not reflexive transitive closure).

\Cref{automata} is devoted to closure properties of regular sets of trees. We use automata to show the regular sets of trees are closed under intersection. We also note the regular sets of trees are not closed under complement and pose the questions of whether they are closed under implication or under residuation.

In \Cref{section:decide} we again use automata to prove the decidability of validity for equations in the signature of Kleene algebra with domain under relational semantics (\Cref{decidability}).

\section{Algebras of binary relations}\label{preliminaries}

We begin by making precise what is meant by an algebra of binary relations. Throughout, we consider that $0 \in \mathbb N$.

\begin{definition}\label{algebra}
An \defn{algebra of binary relations} of the signature $\{\compo, \bjoin, \kstar, 0, 1\}$ is a universal algebra $\algebra A = (A, \compo, \bjoin, \kstar, 0, 1)$ where the elements of the universe $A$ are  binary relations on some (common) set $X$, the \defn{base}, and the interpretations of the symbols are given as follows:
\begin{itemize}
\item
the binary operation $\compo$ is interpreted as \defn{composition} of relations:
\begin{align*}
R \compo S \coloneqq \{(x, y) \in X^2 \mid \exists z \in X : (x, z) \in R \wedge (z, y) \in S\},
\end{align*}

\item
the binary operation $\bjoin$ is interpreted as set-theoretic \defn{union}:
\begin{align*}R \bjoin S &\coloneqq \{(x, y) \in X^2 \mid (x, y) \in R \vee (x, y) \in S\},\end{align*}

\item
the unary operation $\kstar$ is interpreted as \defn{reflexive transitive closure}:
\begin{align*}R{\kstar} \coloneqq \{&(x, y) \in X^2 \mid \exists n \in \mathbb N\ \ \exists x_0 \ldots x_n :\\
&(x_0 = x) \aand (x_n = y) \aand (x_0 , x_1)\in R \aand \dots \aand (x_{n-1}, x_n) \in R\},\end{align*}

\item
the constant $0$ is interpreted as the \defn{empty} relation:
\begin{align*}0 \coloneqq \emptyset,\end{align*}

\item
the constant $1$ is interpreted as the \defn{identity} relation on $X$:
\begin{align*}1 \coloneqq \{(x, x) \in X^2\}.\end{align*}
\end{itemize}
We let $\operatorname{Rel}(\compo, \bjoin, \kstar, 0, 1)$ denote the isomorphic closure of the class of all algebras of binary relations of the signature $\{\compo, \bjoin, \kstar, 0, 1\}$.
\end{definition}

To be clear: the universe of an algebra of binary relations is necessarily closed under the given operations, since the definition of a universal algebra requires the symbols be interpreted as total functions.

\begin{remark}\leavevmode
\begin{enumerate}[(i)]
\item
It is easy to see that $\operatorname{Rel}(\compo, \bjoin, \kstar, 0, 1)$ is not a first-order axiomatisable class, not even closed under elementary equivalence, by a simple argument showing that $\operatorname{Rel}(\compo, \bjoin, \kstar, 0, 1)$ is not closed under ultrapowers. See the appendix for a proof of this well-known fact.

\item
Despite $\operatorname{Rel}(\compo, \bjoin, \kstar, 0, 1)$ being far from a variety, it is easily seen to be closed under subalgebras and products. (An element of a product of algebras of binary relations is the disjoint union of all its component binary relations.) Hence, by a basic theorem of universal algebra (see, for example, \cite[Theorem 10.12]{burris2011course}), the class $\operatorname{Rel}(\compo, \bjoin, \kstar, 0, 1)$ contains its free algebras.

\item It is a folk theorem that the free $\operatorname{Rel}(\compo, \bjoin, \kstar, 0, 1)$-algebra generated by a finite set $\Sigma$ is the set of all \emph{regular languages} over the alphabet $\Sigma$ (with the  operations of language concatenation, union, and so on).

\item
It is well known that the variety $H S P \operatorname{Rel}(\compo, \bjoin, \kstar, 0, 1)$ generated by $\operatorname{Rel}(\compo,\allowbreak \bjoin,\allowbreak \kstar, 0, 1)$ has no finite equational axiomatisation \cite{redko}.

\item
We do however have Kozen's quasivariety of \emph{Kleene algebras} \cite{kozen1994completeness}, defined by a finite number of equations/quasiequations,\footnote{A \defn{quasiequation} is a conditional equation where the condition is a finite conjunction of equations. That is, a quasiequation is a formula of the form $s_1 = t_1 \wedge \dots \wedge s_n = t_n \limplies u = v$.} intermediate to $\operatorname{Rel}(\compo, \bjoin, \allowbreak\kstar, 0,\allowbreak 1)$ and $H S P \operatorname{Rel}(\compo, \bjoin, \kstar, 0, 1)$. That is, \[\operatorname{Rel}(\compo, \bjoin, \kstar, 0, 1) \subseteq \operatorname{Kleene\ algebras} \subseteq H S P \operatorname{Rel}(\compo, \bjoin, \kstar, 0, 1),\] and so \[H S P (\operatorname{Kleene\ algebras}) = H S P \operatorname{Rel}(\compo, \bjoin, \kstar, 0, 1).\]

\end{enumerate}
\end{remark}

Of course, the operations of \Cref{algebra} are not the only operations that can be defined on binary (endo)relations. In particular, various unary `test' operations can be defined; here is a selection.

\begin{definition}\label{tests}\leavevmode
\begin{itemize}
\item
The unary operation $\D$ is the operation of taking the diagonal of the \defn{domain} of a relation:
\[\D(R) = \{(x, x) \in X^2 \mid \exists y \in X : (x, y) \in R\}\text{.}\]
\item
The unary operation $\R$ is the operation of taking the diagonal of the \defn{range} of a relation:
\[\R(R) = \{(y, y) \in X^2 \mid \exists x \in X : (x, y) \in R\}\text{.}\]
\item

The unary operation $\A$ is the operation of taking the diagonal of the \defn{antidomain} of a relation\textemdash those points of $X$ at which the image of the relation in empty:
\[\A(R) = \{(x, x) \in X^2 \mid \cancel{\exists} y \in X : (x, y) \in R\}\text{.}\]
\end{itemize}
\end{definition}

One can vary the operations from those of \Cref{algebra} and/or restrict the binary relations to some particular form. The resulting class will again contain its free algebras. If the class is of interest, then it is useful to establish a description of these free algebras.

Restricting the binary relations to be some type of function (total functions, partial functions, or injective partial functions, for example) tends to yield free algebras whose elements are a `single object', rather than a `set of objects'. The class of semigroups, for example, is the variety generated by $\operatorname{Tot}(\compo)$---algebras of total functions with composi\-tion---and an element of a free semigroup is a single string, rather than a set of strings as we have in the case $\operatorname{Rel}(\compo, \bjoin, \kstar, 0, 1)$. Similarly, elements of free groups are also strings, with groups forming isomorphs of algebras of \emph{permutations}, with the familiar operations.

There is also an observable pattern when test operations are added to the signature: strings are replaced by (labelled) trees. The following results are known.

\begin{enumerate}
\item
		The class $\operatorname{Inj}(\compo, \inv)$, of isomorphs of algebras of injective partial functions with composition and inverse, is the class of inverse semigroups \cite{wagnergeneralised, doi:10.1112/jlms/s1-29.4.396}.\footnote{In this signature, in which $\inv$ is available, the inverse semigroups form a variety. To give an equational axiomatisation it suffices to replace, in the natural axiomatisation, the quasiequation `inverses are unique' by the equation $a \compo a^{-1} \compo a^{-1} \compo a = a^{-1} \compo a \compo a \compo a^{-1}$ \cite{MR0170966}.} (In this signature $\D$ and $\R$ are definable via $\D(R) \coloneqq R \compo R^{-1}$ and $\R(R) \coloneqq R^{-1} \compo R$, respectively.) Elements of free inverse semigroups are certain trees, so-called \emph{Munn trees} \cite{munn1974free}.

\item
The class $\operatorname{Par}(\compo, \D)$---partial functions with composition and domain---is a variety \cite{Tro73}, most commonly known as the \emph{(left) restriction semigroups}. A description of the free algebras has been given, and again, elements can be viewed as trees~\cite{fountain_1991}.

\item
The class $\operatorname{Par}(\compo, \D, \R)$---partial functions with composition, domain, and range---is a proper quasivariety; a finite quasi\-equational axiomatisation was given by Schein \cite{schein}. Once more, a description of the free algebras has been given, and elements can be viewed as trees \cite{doi:10.1142/S0218196709005214}.
\end{enumerate}

We should also mention at this stage Hollenberg's finite equational axiomatisation of the equational theory of the quasivariety $\operatorname{Rel}(\compo, \A)$ (in which $\D$, $0$, and $1$ are easily expressible) \cite{10230740181599}. Of course, this result amounts to an implicit description of the corresponding free algebras, as quotients of term algebras by this theory. Another result involving binary relations (but not tests), is Bloom, \'{E}sik, and Stefanescu's explicit description of the free algebras for the case $\operatorname{Rel}(\compo, \bjoin, \kstar, 0, 1, {^\smile})$, where $^\smile$ is the converse operation $R^\smile \coloneqq \{(x, y) \in X^2 \mid (y, x) \in R\}$ \cite{Bloom1995}. There, the elements of the algebras are sets of strings.

Having noted that \emph{binary relations} $\leadsto$ \emph{sets}, \emph{functions} $\leadsto$ \emph{singletons}, and \emph{tests} $\leadsto$ \emph{trees}, one can anticipate that when tests are added to the case $\operatorname{Rel}(\compo, \bjoin, \kstar, 0, 1)$, elements of free algebras will be sets of labelled trees. We will prove that this is indeed the case (\Cref{main}). On the way to doing this, we also identify the free algebras for the case without the `nondeterministic' operators $+$ and $\kstar$, more precisely, for the case $\operatorname{Rel}(\compo, 1, \D)$ (\Cref{main0}). The analogous results for signatures formed by adding/removing $0$ and/or $1$ follow as corollaries of these two theorems.

\begin{remark}
The term `Kleene algebra with domain' was originally used for a certain quasiequational theory extending the two-sorted \emph{Kleene algebra with tests} with a domain operation \cite{desharnais2006kleene}. It was subsequently redefined as a (strictly less expressive) one-sorted quasiequational theory extending \emph{Kleene algebra} with a domain operation~\cite{desharnais2011internal}.
\end{remark}

\section{Trees}\label{trees}

The central objects we will be working with throughout will be labelled rooted trees. We will give two definitions of these. The first, \Cref{tree1}, is the usual graph-theoretic definition, and we give it in order to make use of basic graph-theoretic terminology: vertex, edge, and so on. The second, \Cref{tree2}, is cleaner, in the sense that isomorphic trees are identical, and will serve as the `official' definition in this paper.

\begin{terminology}\label{tree1}\leavevmode
\begin{itemize}
\item
A \defn{tree} is a connected acyclic undirected graph (reflexive edges are prohibited). All trees will be assumed to be finite unless otherwise stated.

\item
A \defn{rooted tree} is a tree with a distinguished vertex called the \defn{root}.

\item
By a labelled tree, we will mean an \emph{edge-labelled} tree. That is, given a set $\Sigma$ of labels, a \defn{labelled} tree is a tree $T$ together with a function from the edges of $T$ to $\Sigma$.
\end{itemize}
\end{terminology}

\begin{definition}\label{tree2}
Given a set $\Sigma$ of labels, a \defn{labelled rooted tree} is defined recursively as a set of pairs $(a, T)$, where $a \in \Sigma$ and $T$ is a labelled rooted tree.
\end{definition}

Some explanation may be helpful. According to \Cref{tree2}, the empty set is a labelled rooted tree (this is the base case of the definition). This empty set should be thought of as encoding what is, in the graph-theoretic view, the tree with a single vertex. \Cref{encodings} illustrates how some simple examples of labelled rooted trees should be viewed.

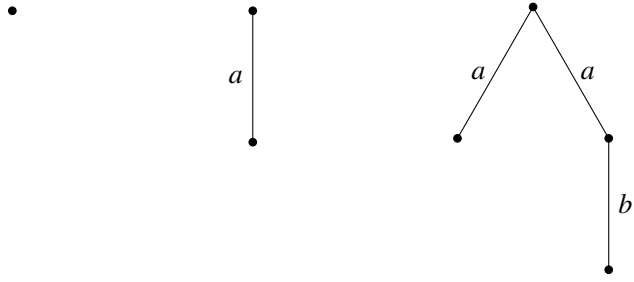
\begin{figure}[H]\centering
\begin{tikzpicture}
\draw[fill] (0,0) circle[radius=.05];
\draw (0, -3.46) ;
\end{tikzpicture}
\hspace{2.5cm}
\begin{tikzpicture}
\draw (0, 0)--node[left]{$a$}(0, -1.73);

\draw[fill] (0,0) circle[radius=.05];
\draw[fill] (0, -1.73) circle[radius=.05];
\draw (0, -3.46) ;
\end{tikzpicture}\hspace{2.5cm}
\begin{tikzpicture}
\draw (0, 0)--node[left]{$a$}(-1, -1.73);
\draw (0, 0)--node[right]{$a$}(1, -1.73);
\draw (1, -1.73)--node[right]{$b$}(1, -3.46);

\draw[fill] (0,0) circle[radius=.05];
\draw[fill] (-1, -1.73) circle[radius=.05];
\draw[fill] (1, -1.73) circle[radius=.05];
\draw[fill] (1, -3.46) circle[radius=.05];
\end{tikzpicture}
\caption{The labelled rooted trees encoded as $\emptyset$, $\{(a, \emptyset)\}$, and $\{(a, \emptyset), (a, \{(b, \emptyset)\})\}$, respectively (with roots at the top)}\label{encodings}
\end{figure}

The reader may note that \Cref{tree2} is more restrictive than the definition of labelled rooted trees obtained from \Cref{tree1}---it cannot describe any tree having a vertex with two distinct but isomorphic child subtrees. However, we will have no need of such trees in this paper.

\begin{definition}
A \defn{pointed tree} is a tree with a distinguished vertex called the \defn{point}.
\end{definition}

We will primarily work with pointed labelled rooted trees. We will usually denote a pointed labelled rooted tree $(T, p)$ by its underlying labelled rooted tree $T$. We define the notion of a homomorphism of (possibly pointed) labelled rooted trees by reference to homomorphisms of relational structures.

\begin{definition}\label{relational_homomorphism}
A \defn{relational structure} $(X, f)$ for a set $\Sigma$ of labels is a set $X$ and an assignment $f$ giving for each element of $a \in \Sigma$ a binary relation $a^f$ on $X$. A \defn{homomorphism} from a relational structure $(X, f)$ to the relational structure $(Y, g)$ (both with label-set $\Sigma$) is a map $\theta \from X \to Y$ validating  
\[(x_1, x_2) \in a^f \implies (\theta(x_1), \theta(x_2)) \in a^g\] for each $a \in \Sigma$.
\end{definition}

We can view any $\Sigma$-labelled rooted tree as a relational structure $(X, f)$ by taking $X$ to be the set of vertices of the tree and for each $a \in \Sigma$ setting $a^f$ to be the set of pairs $(x, y)$ of vertices such that $x$ is the parent of $y$ and the edge $\{x, y\}$ is labelled by $a$. When we speak of a homomorphism $\theta \from S \to T$ of possibly pointed labelled rooted trees, we mean a homomorphism of the trees viewed as relational structures that is also required to map the root of $S$ to the root of $T$ and, if it exists, the point of $S$ to the (therefore extant) point of $T$.

Let $\algebra A$ be an algebra of binary relations (in any of the signatures we take an interest in) with base set $X$. Let $f$ be an assignment of members of $\algebra A$ to the set $\Sigma$ of variables. Then $(\algebra A, f)$ naturally defines a relational structure: $(X, f)$. Conversely, let $(X, f)$ be a relational structure and let $\algebra A$ be any algebra of binary relations on $X$ that includes $\{a^f \mid a \in \Sigma\}$ in its universe (that is, any algebra between the algebra generated by this set and the algebra of \emph{all} binary relations on $X \times X$). Then the standard model-theoretic interpretation $\interp t^{\algebra A, f}$ of any $\Sigma$-term $t$ is independent of the precise choice of $\algebra A$. Thus, when interpreting terms and evaluating equations, it is safe to conflate the concepts of \emph{algebra + assignment} and \emph{relational structure}, and we will often do so. For example, the following definition is stated in terms of an algebra and assignment, but we will mainly use it in contexts where we are ostensibly talking about a relational structure.

\begin{definition}\label{satisfaction}
Let $\algebra A$ be an algebra of binary relations with base $X$, let $t$ be a term, and let $f$ be an assignment to the variables in $t$. We say that a pair $(x, y) \in X \times X$ \defn{satisfies} $t$ if $(x, y) \in \interp t^{\algebra A, f}$.
\end{definition}

We want to be able to reduce trees to forms without any redundant branches. In order to do that, we first define a preorder on trees.

\begin{definition}\label{ordering}
The preorder $\leq$ on (possibly pointed) labelled rooted trees is defined recursively as follows. For trees $T_1$ and $T_2$ with roots $r_1$ and $r_2$ respectively, $T_1 \leq T_2$ if and only if 
\begin{enumerate}[(a)]
\item\label{a}
if $r_2$ is the point vertex of $T_2$, then $r_1$ is the point vertex of $T_1$,

\item\label{b}
for each child $v_2$ of $r_2$, there is a child $v_1$ of $r_1$ such that 
\begin{enumerate}[(i)]
\item
the labels of the edges $r_1 v_1$ and $r_2 v_2$ are equal,

\item
$T_{v_1} \leq T_{v_2}$, where $T_{v_1} $ and $ T_{v_2}$ are the $v_1$-rooted and $v_2$-rooted subtrees respectively.
\end{enumerate}
\end{enumerate}
\end{definition}

That $\leq$ is indeed a preorder is clear. In fact, by induction on the height of the trees, it is easy to see that
\begin{equation}\label{homomorphism}T_1 \leq T_2 \iff \text{there exists a homomorphism }\theta \from T_2 \to T_1.\end{equation}

In the following definition and proposition, we continue to work with trees that may or may not be pointed.

\begin{definition}\label{def:reduction}
Let $T$ be a labelled tree with root $r$. The \defn{reduced form} of $T$ is the tree formed recursively as follows. 
\begin{enumerate}[(a)]
\item\label{first-step}
For each child $v$ of $r$, replace the $v$-rooted subtree with its reduced form. Call the resulting tree $T'$.
\item
For each label $a \in \Sigma$, let $C_a$ be the set of subtrees of $T'$ rooted at a vertex linked to $r$ by an $a$-labelled edge. Remove all but the $\leq$-minimal subtrees in $C_a$.
\end{enumerate}
\end{definition}

\begin{example}[reduction]
The pointed tree on the left of \Cref{fig2} reduces as shown. The tree on the right is already reduced.
\begin{figure}[H]\centering
\begin{tikzpicture}
\draw (0, 0)--node[left]{$a$}(-1, -1.73);
\draw (0, 0)--node[right]{$a$}(1, -1.73);
\draw (-1, -1.73)--node[left]{$a$}(-2, 2*-1.73);
\draw (-1, -1.73)--node[right]{$b$}(0, 2*-1.73);

\draw[fill] (0,0) circle[radius=.05];
\draw[fill] (-1, -1.73) circle[radius=.05];
\draw (-1, -1.73) circle[radius=.1];
\draw[fill] (1, -1.73) circle[radius=.05];
\draw[fill] (-2, 2*-1.73) circle[radius=.05];
\draw[fill] (0, 2*-1.73) circle[radius=.05];
\end{tikzpicture}\hspace{.65cm}\scalebox{2}{$\overset{\leadsto}{\phantom{\framebox(0,25){}}}$}\hspace{-.35cm}
\begin{tikzpicture}
\draw (0, 0)--node[left]{$a$}(-1, -1.73);
\draw (-1, -1.73)--node[left]{$a$}(-2, 2*-1.73);
\draw (-1, -1.73)--node[right]{$b$}(0, 2*-1.73);

\draw[fill] (0,0) circle[radius=.05];
\draw[fill] (-1, -1.73) circle[radius=.05];
\draw (-1, -1.73) circle[radius=.1];
\draw[fill] (-2, 2*-1.73) circle[radius=.05];
\draw[fill] (0, 2*-1.73) circle[radius=.05];
\end{tikzpicture}\hspace{2.1cm}
\begin{tikzpicture}
\draw (0, 0)--node[left]{$a$}(-1, -1.73);
\draw (0, 0)--node[right]{$a$}(1, -1.73);
\draw (1, -1.73)--node[right]{$b$}(2, 2*-1.73);
\draw (1, -1.73)--node[left]{$a$}(0, 2*-1.73);

\draw[fill] (0,0) circle[radius=.05];
\draw[fill] (-1, -1.73) circle[radius=.05];
\draw[fill] (1, -1.73) circle[radius=.05];
\draw (-1, -1.73) circle[radius=.1];
\draw[fill] (2, 2*-1.73) circle[radius=.05];
\draw[fill] (0, 2*-1.73) circle[radius=.05];
\end{tikzpicture}

\caption{Reduction of pointed labelled rooted trees}\label{fig2}
\end{figure}
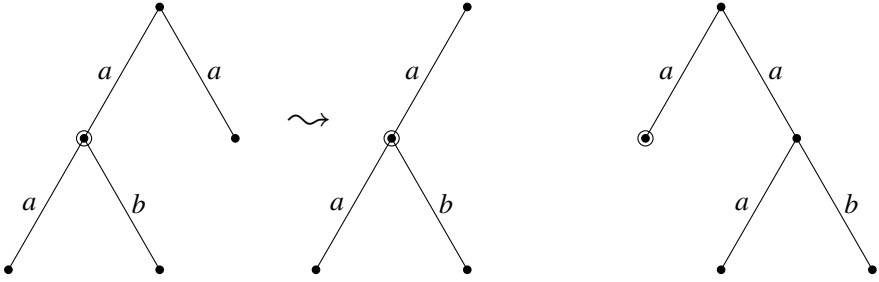
\end{example}

\begin{proposition}\label{order}
The preorder $\leq$ is a partial order on reduced labelled rooted trees.
\end{proposition}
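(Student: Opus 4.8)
The plan is to prove antisymmetry --- reflexivity and transitivity having already been observed --- namely that for reduced labelled rooted trees $T_1$ and $T_2$, if $T_1 \le T_2$ and $T_2 \le T_1$ then $T_1 = T_2$. I would argue by induction on $\max(h(T_1), h(T_2))$, where $h(T)$ is the height of $T$, unpacking the recursive \Cref{ordering}. The base case (both trees a single vertex) is immediate from clause (a), which in this case forces $r_1$ to be the point of $T_1$ exactly when $r_2$ is the point of $T_2$, both trees being $\emptyset$ in the encoding of \Cref{tree2}. For the inductive step, write $r_i$ for the root of $T_i$; clause (a) applied to $T_1 \le T_2$ and to $T_2 \le T_1$ again gives that $r_1$ is the point of $T_1$ iff $r_2$ is the point of $T_2$.

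The heart of the proof is to show that for every label $a$ the set $C_a^{(i)}$ of $a$-labelled child subtrees of $T_i$ --- each regarded as a pointed tree inheriting the point exactly when it contains it --- satisfies $C_a^{(1)} = C_a^{(2)}$; granting this, the preceding observation together with \Cref{tree2} yields $T_1 = T_2$ (including agreement of the points). To prove $C_a^{(1)} \subseteq C_a^{(2)}$, fix $S \in C_a^{(1)}$, say $S = T_v$ for an $a$-labelled child $v$ of $r_1$. Applying clause (b) of $T_2 \le T_1$ to the child $v$ of $r_1$ yields an $a$-labelled child $w$ of $r_2$ with $T_w \le S$; applying clause (b) of $T_1 \le T_2$ to this child $w$ of $r_2$ yields an $a$-labelled child $u$ of $r_1$ with $T_u \le T_w \le S$. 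Now $S$ and $T_u$ both lie in $C_a^{(1)}$, and since $T_1$ is reduced this set consists of $\le$-minimal subtrees; as those subtrees are themselves reduced and of strictly smaller height, the inductive hypothesis collapses equivalence to equality among them, so the minimal ones are pairwise $\le$-incomparable. Hence $T_u \le S$ forces $S \le T_u$, and therefore $S \le T_u \le T_w \le S$. Two further applications of the inductive hypothesis --- to the pair $S, T_u$ (subtrees of $T_1$) and to the pair $S, T_w$ (subtrees of $T_1$ and $T_2$), all of height strictly below $\max(h(T_1), h(T_2))$ --- give $S = T_u = T_w$, so $S = T_w \in C_a^{(2)}$. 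The inclusion $C_a^{(2)} \subseteq C_a^{(1)}$ is symmetric, completing the induction.

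The main obstacle is not a hard idea but keeping the bookkeeping honest. The fact invoked about reduced trees --- that the surviving child subtrees under a fixed label are pairwise $\le$-incomparable --- itself depends on antisymmetry for shorter trees, so it has to be proved inside the same induction rather than quoted beforehand. One must also track the pointed structure faithfully through the recursion, treating a child subtree as pointed precisely when the global point lies inside it, so that the set equalities $C_a^{(1)} = C_a^{(2)}$ genuinely deliver equality of $T_1$ and $T_2$ as \emph{pointed} trees and not merely as trees. The homomorphism reformulation \eqref{homomorphism} is a convenient sanity check but is not needed for the argument.
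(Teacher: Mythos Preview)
Your proof is correct and follows essentially the same approach as the paper's: induction on the maximum height, handling the point via clause~(a), and then the ``bounce twice'' argument---from a child subtree under one root to one under the other and back---using $\leq$-incomparability of the surviving child subtrees in a reduced tree together with the inductive hypothesis to force equality. The only difference is cosmetic (you start from a child of $r_1$ whereas the paper starts from a child of $r_2$), and you are if anything more explicit than the paper in noting that the pairwise $\leq$-incomparability of the $\leq$-minimal child subtrees itself relies on antisymmetry at lower heights, which is a point the paper leaves implicit.
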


\begin{proof}
By induction on the maximum height of the two trees being compared. For the base case, take trees $T_1, T_2$ of height $0$, so both have just a single vertex. Suppose $T_1 \leq T_2$ and $T_2 \leq T_1$. Then by \Cref{ordering}\ref{a}, the tree $T_1$ is pointed if and only if $T_2$ is pointed. Hence $T_1 = T_2$.

Now let $T_1$ and $T_2$ be of height at most $n + 1$, and assume antisymmetry of $\leq$ holds for all reduced trees of height at most $n$. Suppose $T_1 \leq T_2$ and $T_2 \leq T_1$, and denote the roots by $r_1$ and $r_2$, respectively. As before, $T_1$ has a point at $r_1$ if and only if $T_2$ has a point at $r_2$. By \Cref{def:reduction} the child subtrees of vertex $r_1$ are reduced, and for each $a \in \Sigma$ the child subtrees linked by an $a$-labelled edge are pairwise $\leq$-incomparable. Likewise for the child subtrees of vertex $r_2$.

Let $v_2$ be an arbitrary child of $r_2$. Since $T_1 \leq T_2$, we can find a child $v_1$ of $r_1$ as in \Cref{ordering}\ref{b}, giving us $T_{v_1} \leq T_{v_2}$. Then using $T_2 \leq T_1$ and applying \Cref{ordering}\ref{b} again, we obtain a child $v_2'$ of $r_2$ and have $T_{v_2'} \leq T_{v_1} \leq T_{v_2}$ (with $r_2 v_2$ and $r_2 v_2'$ having the same label). Hence by pairwise $\leq$-incomparability, $T_{v_2} = T_{v_2'}$. So $T_{v_2} \leq T_{v_1} \leq T_{v_2}$, which by the inductive hypothesis yields $T_{v_1} = T_{v_2}$. Since $v_2$ was arbitrary, we conclude that every child subtree of $r_2$ is present as a child subtree of $r_1$. Symmetrically, every child subtree of $r_1$ is present as a child subtree of $r_2$. Hence $T_1 = T_2$.
\end{proof}

Note now that the partial order $\leq$ on reduced trees is Noetherian (converse well-founded). Indeed, by structural induction there are a finite number of distinct labelled trees of any fixed depth, hence a finite number of reduced pointed labelled trees of that depth. And $T_1 \leq T_2$ implies the depth of $T_2$ is at most the depth of $T_1$.

\begin{proposition}\label{order2}
If a labelled rooted tree $T$ reduces to $T'$, then $T \leq T'$ and $T' \leq T$.
\end{proposition}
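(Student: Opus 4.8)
The plan is to prove both inequalities at once, by induction on the height of $T$, reading the two conditions of \Cref{ordering} off directly from the recursive structure of the reduction procedure of \Cref{def:reduction}. The base case is immediate: a one-vertex tree is left untouched by reduction, so $T' = T$ and reflexivity of $\leq$ gives both directions.

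For the inductive step I would fix the root $r$ of $T$ and split the reduction of $T$ into its two stages. In the first stage each child subtree $T_v$ of $r$ is replaced by its reduced form $\overline{T_v}$; since $T_v$ has strictly smaller height, the inductive hypothesis applies and yields both $T_v \leq \overline{T_v}$ and $\overline{T_v} \leq T_v$. In the second stage, for each label $a$, all but the $\leq$-minimal members of the set $C_a$ of $a$-labelled child subtrees produced by the first stage are discarded; so the child subtrees of $r$ that survive into $T'$ are exactly the $\leq$-minimal members of the sets $C_a$. Note that neither stage alters whether $r$ itself is the point vertex, so condition \ref{a} of \Cref{ordering} holds automatically for both $T \leq T'$ and $T' \leq T$, and only condition \ref{b} remains to be checked in each direction.

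For $T \leq T'$: by the description above, a child subtree of $r$ in $T'$ is $\overline{T_v}$ for some child subtree $T_v$ of $r$ in $T$ attached by an edge with the same label, and $T_v \leq \overline{T_v}$ by the inductive hypothesis --- exactly what condition \ref{b} requires. For $T' \leq T$: given a child subtree $T_v$ of $r$ in $T$ attached by an $a$-labelled edge, $\overline{T_v}$ lies in $C_a$, and $C_a$ is a finite set of reduced trees, hence a finite poset under $\leq$ by \Cref{order}; thus $C_a$ has a $\leq$-minimal element $M$ with $M \leq \overline{T_v}$, and $M$ survives the second stage, yielding a child subtree of $r$ in $T'$ attached by an $a$-labelled edge with $M \leq \overline{T_v} \leq T_v$ (using transitivity) --- again what condition \ref{b} requires.

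I do not expect any real obstacle; this is a routine structural induction. The one thing to keep straight is the bookkeeping between the two stages of the reduction, in particular that the child subtrees available in $T'$ are precisely the $\leq$-minimal members of the sets $C_a$ of reduced child subtrees, with \Cref{order} ensuring that ``minimal'' is unambiguous there. It is worth noting in passing that one never has to trace where the point vertex goes: at the root its status is manifestly preserved, and everywhere below the root the only constraints involving it are already folded into the $\leq$-inequalities furnished by the inductive hypothesis and by the passage to a minimal element of $C_a$.
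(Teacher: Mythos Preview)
Your proof is correct and follows essentially the same approach as the paper's: induction on height, handling the two stages of reduction separately, and using the inductive hypothesis on child subtrees together with the existence of a $\leq$-minimal element below any given member of $C_a$. The only difference is cosmetic: the paper phrases everything via the homomorphism characterisation \eqref{homomorphism} and glues maps, whereas you verify conditions \ref{a} and \ref{b} of \Cref{ordering} directly; and the paper appeals to $\leq$ being Noetherian where you (equivalently, and arguably more to the point) use finiteness of $C_a$ together with \Cref{order}.
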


\begin{proof}
Another induction on height. All trees of height $0$ are already reduced, so this base case is trivial.

Now assume the result holds whenever $T$ is of height at most $n$. Let $T$ be of height $n+1$, with root $r$. It is clear that the root $r'$ of $T'$ is the point of $T'$ if and only if $r$ is the point of $T$. So there is no obstruction to a homomorphism mapping $r$ to $r'$ or vice versa.

First we show $T \leq T'$. Every child subtree of $r'$ is the reduced form $T'_{v'}$ of some child subtree $T_v$ of $r$ (with $rv$ and $r' v'$ having the same label). Hence, by the inductive hypothesis and \eqref{homomorphism}, there is a homomorphism $\theta_{v'} \from T'_{v'} \to T_v$. By gluing together $\{\theta_{v'} \mid v' \text{ a child of $r'$}\}$, and mapping $r'$ to $r$, we obtain a homomorphism $T' \to T$. Hence $T \leq T'$.

Now we show $T' \leq T$. Let $T''$ be the intermediate tree in the reduction of $T$ to $T'$, that is, $T''$ is the tree formed from $T$ by reducing all child subtrees of $r$. Denote the root of $T''$ by $r''$. Take an arbitrary child subtree $T_v$ of $r$, and let $T_{v''}$ be its reduced form sitting as a subtree of $T''$. So by the inductive hypothesis, there is a homomorphism $T_v \to T_{v''}$. Since $\leq$ is Noetherian on reduced trees, when $T'$ is formed from $T''$ by applying step \ref{b} in the definition of reduction, a subtree $T_{v'}$ with $T_{v'} \leq T_{v''}$ (and such that $r'' v''$ and $r' v'$ have the same label) is retained. Thus there is a homomorphism $T_{v''} \to T_{v'}$, and hence, by composition, a homomorphism $T_v \to T_{v'}$. Gluing together the homomorphisms for each child $v$ of $r$, and mapping $r$ to $r'$, we obtain a homomorphism $T \to T'$. Hence $T' \leq T$.
\end{proof}

Thus reduction selects a canonical member of every $\leq$-equivalence class.  Informally, we can think of $\leq$ on reduced trees as corresponding to the \emph{inclusion relation} on binary relations---if $T_1 \leq T_2$ then $T_1$ is a \emph{more specific} description than $T_2$.

\begin{definition}\label{operations}
Let $\Sigma$ be a set and let $T$ and $S$ be reduced pointed $\Sigma$-labelled rooted trees.\begin{itemize}
\item
 The \defn{pointed tree concatenation} $T \compo S$ of $T$ and $S$ is the tree formed by
 \begin{enumerate}
\item
identifying the \emph{point} of $T$ and the \emph{root} of $S$ (the root is now the root of $T$ and the point is the point of $S$),

\item reducing the resulting tree to its reduced form.
\end{enumerate}

\item 
The \defn{domain} $\D(T)$ of $T$ is the tree formed by
\begin{enumerate}
\item
 \emph{reassigning} the point of $T$ to the current root of $T$,
 \item
 reducing the resulting tree to its reduced form.
\end{enumerate}

\end{itemize}
\end{definition}

\begin{notation}
For a symbol $a$ from an alphabet, we write $\vec a$ for the pointed labelled rooted tree with two vertices, whose point is the child vertex and whose single edge is labelled by $a$. We write $\trivial$ for the pointed labelled rooted tree with a single vertex.
\end{notation}

\begin{remark}
A very similar setup to that presented in this section has already been used for investigating Kleene algebras with domain \cite{mbacke2018completeness}. In that thesis, the graph-theoretic definition of trees is used, and pointed labelled finite rooted trees are called `trees with a top'. There, the relation $\leq$ is termed `simulates', and trees are only considered up to simulation equivalence. Thus there is no notion of a reduced form; the operations of \Cref{operations} are defined without their reduction steps.

We will return to say more about the thesis \cite{mbacke2018completeness} at the end of \Cref{sets}.
\end{remark}

\section{Composition, identity, and domain}\label{singletons}
 
In this section we will identify the free algebras of the class $\operatorname{Rel}(\compo, 1, \D)$. From there, it is straightforward to accommodate the addition of $\bjoin$ and $\kstar$ (and $0$). 

By a \emph{term} we mean a raw syntactic object belonging to a term algebra/abso\-lu\-tely free algebra---no background theory is assumed. Thus equality of terms means literal equality.

\begin{definition}\label{recursive}
We define the \defn{single-tree interpretation} $\tinterp{\,\cdot\,}$ of $\{\compo,\allowbreak 1, \D\}$-terms as follows.
\begin{enumerate}
\item
$\tinterp a \coloneqq  \vec a $, for any variable $a$,

\item
$\tinterp 1 \coloneqq \trivial$,

\item
$\tinterp{s \compo t} \coloneqq \tinterp s \compo \tinterp t$,

\item
$\tinterp {\D(s)} \coloneqq \D (\tinterp s)$.
\end{enumerate}
Here the operations and constants on the right-hand side are those defined for trees in \Cref{trees}.
\end{definition}

\Cref{recursive} is well defined because if pressed for a formal definition of terms we would give one that satisfies unique readability, either by writing $\compo$ in prefix form, requiring parentheses, or defining terms directly as trees.

\begin{lemma}\label{observation}
The map $\tinterp{\,\cdot\,}$ is a surjection from the $\{\compo,\allowbreak 1, \D\}$-terms in variables $\Sigma$ onto the reduced pointed $\Sigma$-labelled rooted trees. Hence $\tinterp{\,\cdot\,}$ is a surjective homomorphism from the term algebra onto the reduced pointed $\Sigma$-labelled rooted trees, viewed as an algebra of the signature $\{\compo,\allowbreak 1, \D\}$ with the operations we have defined.
\end{lemma}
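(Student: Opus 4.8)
The plan is to prove the two claims in order, with the second being an immediate consequence of the first together with the recursive nature of $\tinterp{\,\cdot\,}$. First I would establish surjectivity. The argument is by structural induction on reduced pointed $\Sigma$-labelled rooted trees, using \Cref{tree2} as the official (recursive) definition of trees. For the base case, the one-vertex tree: if it is pointed (its root is its point) it is the image of the term $1$ under $\tinterp{\,\cdot\,}$, since $\tinterp{1} = \trivial$; if it is not pointed, there are in fact no reduced pointed trees with a single vertex that fail to be pointed, so this case does not arise — I should be a little careful here and note that ``pointed'' is part of the data, and the relevant trees in the statement are exactly the pointed ones, so the one-vertex case gives only $\trivial$.

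For the inductive step, let $T$ be a reduced pointed tree of height $n+1$ with root $r$. I would split into two cases according to whether the point of $T$ is the root $r$ or lies strictly below $r$. In the first case, write $T = \D(T_0)$ where $T_0$ is obtained from $T$ by moving the point down to... — actually the cleaner route is: since the point is at the root, consider the child subtrees $T_{v_1}, \dots, T_{v_k}$ of $r$ with their edge labels $a_1, \dots, a_k$; each $T_{v_i}$, made into a pointed tree by designating its root as point, is a reduced pointed tree of smaller height (reducedness is inherited by subtrees, as noted after \Cref{def:reduction}), so by the inductive hypothesis each is $\tinterp{t_i}$ for some term $t_i$; then $T$ is the reduced form of the tree obtained by concatenating, off the root, the trees $\vec{a_i} \compo (\text{rooted-point version of } T_{v_i})$, and since concatenation and $\D$ on trees include a reduction step, one checks that $T = \D\bigl(\bigl(\vec{a_1}\compo t_1^{\text{tree}}\bigr) \bjoin \cdots\bigr)$ — but $\bjoin$ is not in our signature. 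So instead I would build $T$ up one child subtree at a time using composition alone: if $T$ has point at the root, then $T$ is the reduced form of $\D(S_1) \compo \cdots \compo \D(S_k)$ where $S_i = \vec{a_i} \compo U_i$ and $U_i$ is the pointed-at-root tree $T_{v_i}$; here composing $\D$-trees, all of which have their point at the root, amounts to taking the union of the child-subtree multisets at the root and then reducing — this is exactly what the reduction step in \Cref{operations} does. In the second case, where the point lies strictly below $r$ in, say, the $v_1$-rooted subtree along an $a_1$-edge, I would peel off that top edge: $T$ is the reduced form of $\vec{a_1} \compo T_{v_1}$ concatenated with the $\D$-trees coming from the other children of $r$, and $T_{v_1}$ (which genuinely retains the point) is a reduced pointed tree of smaller height, hence $\tinterp{t}$ for some $t$; one then has to check the bookkeeping that the reduced forms match up, using \Cref{order2} and the fact that $\leq$ respects the operations.

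The second sentence of the lemma then follows almost for free: clauses (3) and (4) of \Cref{recursive} say precisely that $\tinterp{s \compo t} = \tinterp{s} \compo \tinterp{t}$ and $\tinterp{\D(s)} = \D(\tinterp{s})$ where the right-hand operations are the tree operations of \Cref{operations}, and clause (2) says $\tinterp{1} = \trivial$, which is the identity constant of the tree algebra; so $\tinterp{\,\cdot\,}$ is a homomorphism from the (absolutely free) term algebra to the algebra of reduced pointed $\Sigma$-labelled rooted trees by the universal property of term algebras — there is essentially nothing to prove here beyond citing the defining clauses. Surjectivity was just established, so the map is a surjective homomorphism, as claimed.

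The main obstacle I anticipate is the inductive step of surjectivity, specifically the bookkeeping around the reduction steps built into tree concatenation and $\D$: one needs to verify that assembling a tree from its child subtrees via iterated composition and $\D$, each of which silently reduces, yields exactly the reduced tree $T$ one started with, rather than some $\leq$-equivalent tree. The key facts that make this go through are \Cref{order2} (a tree is $\leq$-equivalent to its reduced form, in both directions) and \Cref{order} (reduction picks the unique reduced representative of each $\leq$-class), together with a small lemma — which may need to be stated explicitly or folded in — that the tree operations are monotone with respect to $\leq$, so that replacing child subtrees by $\leq$-equivalent ones does not change the reduced form of the result. Handling the ``point at the root'' versus ``point strictly below'' dichotomy cleanly is the other place where care is needed, since $\D$ is what produces point-at-root trees and composition is what threads the point downward.
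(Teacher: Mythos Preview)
Your proposal is correct and follows essentially the same approach as the paper: induct on the tree, split on whether the point is at the root, and in each case realise $T$ by the term $\D(a_1 \compo t_1) \compo \dots \compo \D(a_n \compo t_n)$ (point at root) or $\D(a_1 \compo t_1) \compo \dots \compo \D(a_{n-1} \compo t_{n-1}) \compo a_n \compo t_n$ (point in the $a_n$-child). The paper is terser about the reduction bookkeeping you flag as the main obstacle---it simply asserts these terms realise $T$---so your instinct to invoke \Cref{order} and \Cref{order2} to justify that the built-in reductions land back on $T$ is, if anything, more careful than the original.
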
 

\begin{proof}
We show that every reduced tree is the single-tree interpretation of some term by induction on the size of the tree.

A tree whose root has no children is just $\trivial$---the interpretation of $1$---so assume we have a  tree $T$ given by a nonempty set $\{(a_1, T_1), \dots, (a_n, T_n)\}$, and a distinguished point $p$. First suppose $p$ is the root of $T$. For each $1 \leq i \leq n$, let $t_i$ be a term whose interpretation is the pointed tree whose tree is $T_i$ and whose point is its root. Then we can realise $T$ by the term $\D(a_1 \compo t_1) \compo \dots \compo \D(a_n \compo t_n)$. (We write iterated $\compo$ without brackets because the positioning of the brackets turns out to be immaterial.)

Alternatively, suppose $p$ is not the root of $T$; so without loss of generality $p$ is a vertex in $T_n$. Let $t_1, \dots, t_{n-1}$ be as before and now let $t_n$ be a term whose interpretation is the pointed tree whose tree is $T_n$ and whose point is $p$. Then we can realise $T$ by the term $\D(a_1 \compo t_1) \compo \dots \compo \D(a_{n-1} \compo t_{n-1}) \compo a_n \compo t_n$. 
\end{proof}

The following lemma says that the single-tree interpretation of a term records for us a tree that, in a relational structure, `connects' $x$ and $y$ if and only if the term is satisfied by $(x, y)$.

\begin{lemma}\label{path0}
Let $\algebra A$ be a $\{\compo, 1, \D\}$-algebra of binary relations, with base $X$. Let $t$ be a $\{\compo, 1, \D\}$-term, and let $f$ be an assignment of elements of $\algebra A$---so, binary relations---to the variables in $t$. Then for any $x, y \in X$, the following are equivalent.
\begin{enumerate}
\item
The pair $(x, y)$ belongs to the model-theoretic interpretation of $t$ under the assignment $f$.

\item
There is a homomorphism of $\tinterp t$ into $(X, f)$ such that the root of $\tinterp t$ is mapped to $x$ and the point of $\tinterp t$ is mapped to $y$.
\end{enumerate}
\end{lemma}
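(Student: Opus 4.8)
The plan is to prove the two conditions equivalent by induction on the structure of the term $t$, following the recursive \Cref{recursive} and repeatedly invoking the homomorphism characterisation \eqref{homomorphism} of $\leq$ together with \Cref{order2}, which guarantees that a tree and its reduced form are $\leq$-equivalent. The base cases are immediate. If $t = a$ is a variable then $\tinterp a = \vec a$ has root $r$, point equal to the unique child $v$, and edge $rv$ labelled $a$; a relational-structure homomorphism of $\tinterp a$ into $(X,f)$ sending $r$ to $x$ and $v$ to $y$ exists precisely when $(x,y) \in a^f$, which is the interpretation of $a$. If $t = 1$ then $\tinterp 1 = \trivial$ is a single vertex that is simultaneously the root and the point, so a homomorphism sending the root to $x$ and the point to $y$ exists iff $x = y$, matching the interpretation of $1$ as the identity relation.

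For the composition step, assume the result for $s$ and $u$ and put $t = s \compo u$. Write $G$ for the tree obtained by identifying the point of $\tinterp s$ with the root of $\tinterp u$ (before reduction), so that $\tinterp t$ is the reduced form of $G$. A homomorphism $\theta \from G \to (X,f)$ with $\theta(\text{root}) = x$ and $\theta(\text{point}) = y$ is the same thing as a pair of homomorphisms $\tinterp s \to (X,f)$ and $\tinterp u \to (X,f)$ agreeing on the identified vertex; letting $z$ be the common image of that vertex, the first sends root to $x$ and point to $z$ and the second sends root to $z$ and point to $y$. By the inductive hypothesis such a pair exists for some $z$ iff there is $z \in X$ with $(x,z) \in \interp s^{\algebra A,f}$ and $(z,y) \in \interp u^{\algebra A,f}$, i.e.\ iff $(x,y) \in \interp{s \compo u}^{\algebra A,f}$. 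Finally, by \Cref{order2} and \eqref{homomorphism} there are homomorphisms $G \to \tinterp t$ and $\tinterp t \to G$, and since these are homomorphisms of pointed rooted trees they fix the root and the point; composing with them shows that a suitable homomorphism out of $G$ exists iff one out of $\tinterp t$ does.

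The domain step is similar. For $t = \D(s)$, let $H$ be the tree obtained from $\tinterp s$ by reassigning the point to the root (before reduction), so $\tinterp t$ is the reduced form of $H$. The relational structure underlying $H$ coincides with that of $\tinterp s$, and a homomorphism $H \to (X,f)$ sending root to $x$ and point to $y$ forces $x = y$ (root and point of $H$ being the same vertex) and is otherwise just a homomorphism $\tinterp s \to (X,f)$ sending the root of $\tinterp s$ to $x$, with the point of $\tinterp s$ mapped to some unconstrained $z$. By the inductive hypothesis such a homomorphism exists iff there is $z \in X$ with $(x,z) \in \interp s^{\algebra A,f}$; together with $x = y$ this is exactly $(x,y) \in \interp{\D(s)}^{\algebra A,f}$. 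Passing from $H$ to its reduced form $\tinterp t$ is handled by \Cref{order2} and \eqref{homomorphism} as in the composition case.

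I expect the only real subtlety to be the bookkeeping around the reduction steps built into the tree operations of \Cref{operations}: one must check that the homomorphisms furnished by \Cref{order2} and \eqref{homomorphism} map root to root and point to point, so that pre- or post-composing with them preserves the constraints `root $\mapsto x$' and `point $\mapsto y$'. This follows from the definition of reduction, which keeps the root in place and leaves it designated as the point exactly when it was originally. A minor additional point is that the intermediate trees $G$ and $H$ need not be in the normal form of \Cref{tree2} (they may contain isomorphic sibling subtrees), but this causes no difficulty, since homomorphisms, $\leq$, and reduction all make sense for arbitrary labelled rooted trees via the relational-structure viewpoint.
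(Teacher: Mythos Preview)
Your proof is correct and follows essentially the same approach as the paper's: structural induction on terms, with the composition and domain cases handled by gluing/retagging the component trees and then appealing to the fact that a tree and its reduced form admit homomorphisms in both directions. Your explicit naming of the unreduced intermediates $G$ and $H$ and your remarks about root/point preservation under reduction are just a more careful rendering of what the paper compresses into the single clause ``since there exist homomorphisms in both directions between a tree and its reduced form''.
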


\begin{proof}
By structural induction on terms. As before, we write $\interp t^{\algebra A, f}$ for the model-theoretic interpretation of $t$. 

First, suppose $t = a$, for some variable $a$. Then $\tinterp t = \vec a$---consisting of an $a$-labelled edge linking the root $r$ to the point $p$. Therefore $(x, y) \in \interp t^{\algebra A, f}$ if and only if  $(x, y) \in f(a)$, so if and only if the map given by $r \mapsto x$ and $p \mapsto y$ is a homomorphism. The case $t = 1$ is similar.

Next, suppose $t = s_1 \compo s_2$ and that the equivalence holds for $s_1$ and for $s_2$. Then $(x, y) \in \interp t^{\algebra A, f}$ if and only if there is a $z \in X$ such that $(x, z) \in \interp {s_1}^{\algebra A, f}$ and $(z, y) \in \interp {s_2}^{\algebra A, f}$. By the inductive hypotheses, the latter is equivalent to the existence of a homomorphism from $\tinterp {s_1}$ mapping its root $r_1$ to $x$ and point $p_1$ to $z$, and a homomorphism from $\tinterp {s_2}$ mapping its root $r_2$ to $z$ and point $p_2$ to $y$. This is equivalent to the existence of a homomorphism from $\tinterp {s_1 \compo s_2}$, mapping the root to $x$ and point to $y$, since there exist homomorphisms in both directions between a tree and its reduced form.

Finally, suppose $t = \D(s)$ and that the equivalence holds for $s$. Then $(x, y) \in \interp t^{\algebra A, f}$ if and only if $x = y$ and there exists $z$ such that $(x, z) \in \interp s^{\algebra A, f}$. By the inductive hypothesis, this is equivalent to $x = y$ and the existence of a homomorphism mapping the root of $\tinterp s$ to $x$ and the point to $z$. Since $\tinterp t$ and $\tinterp s$ differ only by the position of their point, which for $\tinterp t$ is the root, the latest condition is equivalent to the existence of a homomorphism from $\tinterp t$ mapping both the root/point to $x$/$y$, as required.
\end{proof}

\begin{corollary}[soundness with respect to relations]\label{sound0}
For any pair $s$ and $t$ of $\{\compo, 1, \D\}$-terms 
\[\tinterp s = \tinterp t \implies \operatorname{Rel}(\compo, 1, \D) \models s = t.\]
\end{corollary}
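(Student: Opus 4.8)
The plan is to read this off directly from \Cref{path0}. Fix $\{\compo, 1, \D\}$-terms $s$ and $t$ with $\tinterp s = \tinterp t$, and take an arbitrary $\{\compo, 1, \D\}$-algebra of binary relations $\algebra A$ with base $X$, together with an assignment $f$ of members of $\algebra A$ to the variables. Since equations are preserved under isomorphism, it suffices to treat genuine algebras of binary relations rather than the full isomorphic closure $\operatorname{Rel}(\compo, 1, \D)$; and it is harmless to assume $f$ is defined on every variable occurring in $s$ or in $t$, extending arbitrarily if need be. (\Cref{observation} ensures $\tinterp s$ and $\tinterp t$ are well-defined reduced pointed trees, so the hypothesis is meaningful.)

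First I would apply \Cref{path0} to $s$: a pair $(x, y) \in X \times X$ lies in $\interp s^{\algebra A, f}$ if and only if there is a homomorphism from $\tinterp s$ into the relational structure $(X, f)$ carrying the root of $\tinterp s$ to $x$ and the point to $y$. Applying \Cref{path0} again, this time to $t$, the pair $(x, y)$ lies in $\interp t^{\algebra A, f}$ exactly when there is such a homomorphism from $\tinterp t$. Since $\tinterp s$ and $\tinterp t$ are literally the same pointed labelled rooted tree, these two conditions are verbatim identical, whence $\interp s^{\algebra A, f} = \interp t^{\algebra A, f}$. As $\algebra A$ and $f$ were arbitrary, $s = t$ holds throughout $\operatorname{Rel}(\compo, 1, \D)$, which is the claim.

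There is essentially no obstacle: the mathematical content is entirely carried by \Cref{path0}, and the only points needing (routine) care are that validity of an equation is an isomorphism-invariant and that one may freely extend the assignment to variables not appearing in the terms. The only conceivable subtlety would be a mismatch between the two sides should some variable appear in one term but not the other, but this is dissolved by the extension convention just mentioned, since a variable absent from a term places no constraint on the corresponding relation.
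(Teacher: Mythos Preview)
Your argument is correct and is exactly the intended one: the paper states this as an immediate corollary of \Cref{path0} with no further proof, and the explicit version you wrote out matches the analogous proof the paper later gives for \Cref{sound}. The remarks about isomorphism invariance and extending the assignment are harmless but unnecessary for the paper's purposes.
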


\begin{lemma}[completeness with respect to relations]\label{complete0}
For any pair $s$ and $t$ of $\{\compo, 1, \D\}$-terms 
\[\operatorname{Rel}(\compo, 1, \D) \models s = t \implies \tinterp s = \tinterp t.\]
\end{lemma}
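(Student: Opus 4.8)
The plan is to turn the tree $\tinterp t$ into a relational structure and feed it back through \Cref{path0}. Given a $\{\compo, 1, \D\}$-term $t$, view $T \coloneqq \tinterp t$ as a relational structure $(X, f)$ in the manner described after \Cref{relational_homomorphism}: $X$ is the vertex set of $T$, and $a^f$ is the set of parent–child pairs joined by an $a$-labelled edge. Let $\algebra A$ be the algebra of \emph{all} binary relations on $X$ in the signature $\{\compo, 1, \D\}$; this lies in $\operatorname{Rel}(\compo, 1, \D)$ and contains every $a^f$, so $\interp{\,\cdot\,}^{\algebra A, f}$ is the standard interpretation on $(X,f)$. The identity map on $X$ is a homomorphism of pointed labelled rooted trees $T \to (X, f)$ sending the root $r$ of $T$ to $r$ and the point $p$ of $T$ to $p$, so by \Cref{path0}(2)$\Rightarrow$(1) we get $(r, p) \in \interp t^{\algebra A, f}$.

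Now suppose $\operatorname{Rel}(\compo, 1, \D) \models s = t$. Then in particular $\interp s^{\algebra A, f} = \interp t^{\algebra A, f}$, so $(r, p) \in \interp s^{\algebra A, f}$. Applying \Cref{path0}(1)$\Rightarrow$(2) to $s$, there is a homomorphism $\tinterp s \to (X, f) = T$ of pointed labelled rooted trees mapping the root of $\tinterp s$ to $r$ and its point to $p$; that is, a homomorphism $\tinterp s \to \tinterp t$. By the equivalence \eqref{homomorphism}, this gives $\tinterp t \leq \tinterp s$. Swapping the roles of $s$ and $t$ (using $\tinterp s$ as the structure) yields $\tinterp s \leq \tinterp t$ in the same way. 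Finally, both $\tinterp s$ and $\tinterp t$ are reduced trees by \Cref{observation}, so by \Cref{order} the preorder $\leq$ is antisymmetric on them, whence $\tinterp s = \tinterp t$.

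I expect no serious obstacle here; the one point requiring care is bookkeeping about the direction of the homomorphism versus the direction of $\leq$ in \eqref{homomorphism}, and the observation — already built into the framework of \Cref{trees} — that the relational structure associated to a tree \emph{is} that tree, so that the homomorphism produced by \Cref{path0} is exactly a tree homomorphism respecting root and point. Everything else is an immediate application of the lemmas established above.
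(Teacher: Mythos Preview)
Your proof is correct and is essentially the same as the paper's: both use $\tinterp{\,\cdot\,}$ of one term as the relational structure, invoke \Cref{path0} twice to produce a pointed-tree homomorphism and hence a $\leq$-inequality, then argue by symmetry and antisymmetry of $\leq$ on reduced trees. The only cosmetic difference is that you start with $\tinterp t$ as the structure where the paper starts with $\tinterp s$, and you make the appeal to \Cref{order} explicit where the paper leaves it implicit.
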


\begin{proof}
Suppose $\operatorname{Rel}(\compo, 1, \D) \models s = t$. We will show that $\tinterp s \leq \tinterp t$. Then by symmetry, also $\tinterp t \leq \tinterp s$. So $\tinterp s = \tinterp t$ and we are done.

Let $r$ be the root and $p$ the point of $\tinterp s$. On $\tinterp s$ viewed as a relational structure,  by using \Cref{path0} and the identity homomorphism $\tinterp s \to \tinterp s$, the pair $(r, p)$ belongs to the model-theoretic interpretation of $s$ (under the evident variable assignment). Hence, by the assumption that $\operatorname{Rel}(\compo, 1, \D) \models s = t$, we have that $(r, p)$ belongs to the model-theoretic interpretation of $t$. Then by \Cref{path0} there is a homomorphism of labelled trees from $\tinterp t$ into $\tinterp s$ mapping the root of $\tinterp t$ to $r$ and the point of $\tinterp t$ to $p$. That is, there is a homomorphism of pointed labelled rooted trees from $\tinterp t$ into $\tinterp s$. We know this is equivalent to the conclusion $\tinterp s \leq \tinterp t$ we seek, so we are done.\end{proof}

With \Cref{sound0} and \Cref{complete0} available, we can now complete the proof that we have identified the free algebras of the class $\operatorname{Rel}(\compo, 1, \D)$.

\begin{theorem}\label{main0}
Let $\Sigma$ be an alphabet, and let $\mathcal R_\Sigma$ be the set of reduced pointed $\Sigma$-labelled finite rooted trees. Then the free $\operatorname{Rel}(\compo, 1, \D)\text{-algebra}$ over $\Sigma$ is $\mathcal R_\Sigma$ equipped with the operations of pointed tree concatenation and of domain from \Cref{operations} (and the constant $\trivial$).
\end{theorem}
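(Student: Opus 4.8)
The plan is to obtain the theorem by combining the three results that directly precede it with a routine appeal to universal algebra. First I would note that, just as recorded in Remark (ii) for the signature $\{\compo, \bjoin, \kstar, 0, 1\}$, the class $\operatorname{Rel}(\compo, 1, \D)$ is closed under subalgebras and under products: a subalgebra of an algebra of binary relations is again one, and in a product the component relations may be taken on disjoint copies of the bases, with $\compo$, $\D$ and $1$ all respecting such disjoint unions. Hence $\operatorname{Rel}(\compo, 1, \D)$ contains its free algebras, and the free $\operatorname{Rel}(\compo, 1, \D)$-algebra over $\Sigma$ may be presented as $\mathbf{T}_\Sigma / \theta$, where $\mathbf{T}_\Sigma$ is the term algebra in variables $\Sigma$ and $\theta \coloneqq \{(s, t) \mid \operatorname{Rel}(\compo, 1, \D) \models s = t\}$, with distinguished generators the $\theta$-classes of the variables.

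Next I would simply read off what we need from the preceding results. By \Cref{observation}, the single-tree interpretation $\tinterp{\,\cdot\,}$ is a surjective homomorphism from $\mathbf{T}_\Sigma$ onto $\mathcal{R}_\Sigma$ (with the operations of \Cref{operations}). By \Cref{sound0} and \Cref{complete0} together, for all $\{\compo, 1, \D\}$-terms $s$ and $t$ we have $\tinterp s = \tinterp t$ if and only if $\operatorname{Rel}(\compo, 1, \D) \models s = t$; in other words, the kernel of $\tinterp{\,\cdot\,}$ is exactly $\theta$.

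The homomorphism theorem then gives an isomorphism $\mathbf{T}_\Sigma / \theta \to \mathcal{R}_\Sigma$ induced by $\tinterp{\,\cdot\,}$, and since the variable $a$ is sent by $\tinterp{\,\cdot\,}$ to $\tinterp a = \vec a$, this isomorphism carries the distinguished generators to the trees $\{\vec a \mid a \in \Sigma\}$. Thus $\mathcal{R}_\Sigma$, equipped with pointed tree concatenation, domain, and the constant $\trivial$, and with $\Sigma$ embedded via $a \mapsto \vec a$, is a copy of the free $\operatorname{Rel}(\compo, 1, \D)$-algebra over $\Sigma$. If one prefers to exhibit the universal mapping property directly: for any $\operatorname{Rel}(\compo, 1, \D)$-algebra $\algebra A$ and assignment $g \from \Sigma \to \algebra A$, the rule $\tinterp t \mapsto \interp{t}^{\algebra A, g}$ is well defined by \Cref{sound0}, is a homomorphism because $\tinterp{\,\cdot\,}$ and term interpretation both are, restricts to $g$ on the $\vec a$'s, and is the unique such homomorphism because the $\vec a$'s generate $\mathcal{R}_\Sigma$.

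I do not expect a genuine obstacle here: all the content lives in \Cref{path0}, \Cref{sound0}, \Cref{complete0}, and \Cref{observation}, and what remains is bookkeeping. The only two points needing any care are checking that the closure of $\operatorname{Rel}(\compo, 1, \D)$ under $\mathbf S$ and $\mathbf P$ still goes through for this reduced signature, and confirming that the isomorphism respects the chosen generating set—both of which are immediate.
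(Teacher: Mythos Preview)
Your proposal is correct and essentially matches the paper's own proof: both invoke \Cref{observation} (or equivalently, that $\mathcal R_\Sigma$ is generated by $\Sigma$) to present $\mathcal R_\Sigma$ as a quotient of the term algebra via $\tinterp{\,\cdot\,}$, then use \Cref{sound0} and \Cref{complete0} to identify the kernel with the $\operatorname{Rel}(\compo,1,\D)$-equational congruence, and conclude by the first isomorphism theorem and a standard universal-algebra fact. Your extra remarks on closure under $\mathbf S$ and $\mathbf P$ and the explicit universal mapping property are sound but not strictly needed, since the paper simply cites the basic result that the term algebra modulo the equational theory is the free algebra.
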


\begin{proof}
Since $\mathcal R_\Sigma$ is generated by $\Sigma$, it follows by the first isomorphism theorem of universal algebra that $\mathcal R_\Sigma$ is isomorphic to a quotient $\algebra Q$ of the term algebra (for signature $\{\compo, 1, \D\}$) over variables $\Sigma$. The congruence relation $\sim$ defining the quotient is given by $s \sim t \iff \tinterp s = \tinterp t$, for terms $s$ and $t$ over $\Sigma$. So by \Cref{sound0} and \Cref{complete0}, the congruence $\sim$ is given by equational validity in $\operatorname{Rel}(\compo, 1, \D)$. It is a basic result of universal algebra that $\algebra Q$ is then precisely the free algebra over $\Sigma$ of the class $\operatorname{Rel}(\compo, 1, \D)$.
\end{proof}

The inclusion of $1$ and exclusion of $0$ from the signature was in fact not essential. We can easily add and remove them according to our wishes.

\begin{corollary}\label{constants}
Let $\Sigma$ and $\mathcal R_\Sigma$ (viewed as an algebra) be as in \Cref{main0}.
\begin{itemize}
\item
The free $\operatorname{Rel}(\compo, \D)$-algebra over $\Sigma$ is given by removing the tree $\trivial$ from $\mathcal R_\Sigma$.

\item
The free $\operatorname{Rel}(\compo, 0, \D)$-algebra over $\Sigma$ is given by the addition of a zero element---an element validating $0 \compo T  = T \compo 0 = 0$ and $\D(0) = 0$---to the free $\operatorname{Rel}(\compo, \D)$-algebra over $\Sigma$.

\item
The free $\operatorname{Rel}(\compo, 1, 0, \D)$-algebra over $\Sigma$ is given by the addition of a zero element to $\mathcal R_\Sigma$.
\end{itemize}
\end{corollary}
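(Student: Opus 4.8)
The plan is to handle the three parts uniformly. For each of the signatures $\{\compo, \D\}$, $\{\compo, 0, \D\}$, and $\{\compo, 1, 0, \D\}$, the corresponding class of algebras of binary relations is closed under subalgebras and products (by the argument identical to the one given for the full signature), hence contains its free algebras; so the free algebra over $\Sigma$ is the term algebra for that signature over $\Sigma$, quotiented by the congruence ``equal over all algebras of binary relations'', exactly as in the proof of \Cref{main0}. In each case it then remains to identify this quotient concretely via the first isomorphism theorem.

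For the first part I would restrict the single-tree interpretation $\tinterp{\,\cdot\,}$ to $\{\compo, \D\}$-terms. Every such term contains a variable, so by \Cref{path0} its interpretation in the relational structure $\tinterp t$ is satisfied by the pair consisting of the root and the point, whence $\tinterp t$ is non-trivial; conversely, the construction in the proof of \Cref{observation} yields, for every reduced tree other than $\trivial$, a realising $\{\compo, \D\}$-term (when a relevant child subtree is $\trivial$, replace the corresponding $\D(a \compo t)$ or $a \compo t$ by $\D(a)$ or $a$). Thus the image of $\tinterp{\,\cdot\,}$ on $\{\compo,\D\}$-terms is precisely $\mathcal R_\Sigma \setminus \{\trivial\}$, and since pointed tree concatenation and domain of non-trivial trees are again non-trivial (reduction never deletes every child of a vertex), this is a subalgebra of $\mathcal R_\Sigma$. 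The kernel of this restricted map is the equational theory of $\operatorname{Rel}(\compo, \D)$: by \Cref{sound0} and \Cref{complete0} it equals the equational theory of $\operatorname{Rel}(\compo, 1, \D)$ restricted to $\{\compo, \D\}$-terms, and the latter coincides with that of $\operatorname{Rel}(\compo, \D)$ because a $\{\compo, \D\}$-term is interpreted identically in a $\{\compo, 1, \D\}$-algebra of binary relations and in its $\{\compo, \D\}$-reduct, and identically in a $\{\compo, \D\}$-algebra of binary relations on $X$ and in the algebra of all binary relations on $X$ (a $\{\compo, 1, \D\}$-algebra extending it). The first isomorphism theorem then finishes the first part.

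For the second and third parts the new ingredient is a description of which terms of the extended signature equal $0$: a $\{\compo, 0, \D\}$-term (resp.\ $\{\compo, 1, 0, \D\}$-term) is equal to $0$ throughout $\operatorname{Rel}(\compo, 0, \D)$ (resp.\ $\operatorname{Rel}(\compo, 1, 0, \D)$) if and only if the symbol $0$ occurs in it. The ``if'' direction is a short structural induction using $0 \compo t = t \compo 0 = 0$ and $\D(0) = 0$. The ``only if'' direction holds because a term without an occurrence of $0$ is a $\{\compo, \D\}$-term (resp.\ $\{\compo, 1, \D\}$-term), and by \Cref{path0} its interpretation in $\tinterp t$ contains the root-and-point pair, hence is non-empty. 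Furthermore, on $0$-free terms the equational theory of the extended class coincides with that of $\operatorname{Rel}(\compo, \D)$ (resp.\ $\operatorname{Rel}(\compo, 1, \D)$), again by passing to reducts in one direction and, in the other, by enlarging the ambient algebra to contain the empty relation, which leaves the interpretation of $0$-free terms unchanged. Consequently the congruence on the extended term algebra has a single class consisting of all terms in which $0$ occurs; this class is distinct from every other class, is absorbing for $\compo$ and fixed by $\D$; and on the $0$-free terms the congruence is the one already analysed. Hence the quotient is the algebra of the first part (resp.\ $\mathcal R_\Sigma$) with a zero element adjoined, and one checks directly that the adjoined element satisfies $0 \compo T = T \compo 0 = 0$ and $\D(0) = 0$, as claimed.

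I do not anticipate a serious obstacle: the corollary is essentially bookkeeping built on \Cref{main0}. The points requiring care are the two ``reduct/extension'' arguments showing that the equational theories of $\{\compo, \D\}$- and $\{\compo, 1, \D\}$-terms are unaffected by enlarging the signature, and the verification that adjoining $0$ introduces exactly one new congruence class --- which is precisely where \Cref{path0} is invoked, to rule out a $0$-free term collapsing to $0$.
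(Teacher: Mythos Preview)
Your proposal is correct and follows essentially the same approach as the paper: restrict $\tinterp{\,\cdot\,}$ to the smaller signature, identify its image as the nontrivial trees, transfer \Cref{sound0} and \Cref{complete0} via the embedding of $\{\compo,\D\}$-algebras of relations into $\{\compo,1,\D\}$-algebras, and handle $0$ by the observation that a term lands in the zero class precisely when the symbol $0$ occurs in it. You supply more detail than the paper does (the explicit surjectivity fix when a child subtree is $\trivial$, the closure of nontrivial trees under $\compo$ and $\D$, and the argument via \Cref{path0} that a $0$-free term cannot collapse to $\emptyset$), but the skeleton is identical.
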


\begin{proof}
For $\operatorname{Rel}(\compo, \D)$, first note that every nontrivial tree in $\mathcal R_\Sigma$ is the single-tree interpretation of a $\{\compo, \D\}$-term, and conversely, if a tree is the interpretation of a $\{\compo, \D\}$-term then it cannot be trivial---by induction, all such interpretations have at least one edge. Hence the nontrivial trees indeed form a $\{\compo, \D\}$-algebra generated by (the interpretations of elements of) $\Sigma$. Since every $\{\compo, \D\}$-algebra of relations embeds in a $\{\compo, 1, \D\}$-algebra of relations, it follows from \Cref{sound0} that every equation validated by the nontrivial trees is validated by all $\{\compo, \D\}$-algebras of relations. The converse follows immediately from \Cref{complete0}.

For $\operatorname{Rel}(\compo, 0, \D)$ and $\operatorname{Rel}(\compo, 1, 0, \D)$, note that by the definition of a zero element, a term is interpreted as the zero of the algebra if and only if the symbol $0$ appears in the term, and similarly a term is interpreted as $\emptyset$ in every algebra of relations if and only if $0$ appears in the term. These observations are sufficient to extend \Cref{sound0} and \Cref{complete0} to terms that may contain $0$.
\end{proof}

We remark that the class $\operatorname{Rel}(\compo, 1, \D)$, as noted in \cite{HIRSCH201175}, forms a quasivariety.\footnote{The short proof of this using general model-theoretic results consists of noting that the class is both closed under direct products and---almost by definition---has a pseudouniversal axiomatisation. (See \cite[Section 9.2]{bygames} for a definition of \emph{pseudouniversal}.)} It has been shown that this quasivariety is not finitely axiomatisable in first-order logic \cite{HIRSCH201175}. Naturally, the same statements hold when we add/remove $0$ and $1$.

\section{Expansion by union and reflexive transitive closure}\label{sets}

In this section, we extend the result of the previous section to provide a description of the free algebras of the class of relational Kleene algebras with domain, that is, the free algebras of $\operatorname{Rel}(\compo, \bjoin, \kstar, 0, 1, \D)$.

\begin{definition}\label{standard}
For a set $K$ of reduced trees, let $\operatorname{maximal} (K)$ denote the set of $\leq$-maximal elements of $K$. We lift the notation $\compo$ and $\D$ of \Cref{operations} to sets of trees by using elementwise application. We define the \defn{standard tree interpretation} $\llbracket \, \cdot \, \rrbracket$ of $\{\compo, \bjoin, \kstar, 0,\allowbreak 1, \D\}$-terms as follows.
\begin{enumerate}
\item
For $a \in \Sigma, \ \interp a \coloneqq \{ \vec a \}$,
\item
$\interp 0 \coloneqq \emptyset$,

\item
$\interp 1 \coloneqq \{\trivial\}$,

\item
$\interp{ s \bjoin t} \coloneqq \operatorname{maximal}(\interp s \cup \interp t)$,
\item
$\interp{s \compo t} \coloneqq \operatorname{maximal}(\interp{s} \compo \interp{t} )$,
\item
$\interp s^* \coloneqq \operatorname{maximal}(\bigcup_{i = 1}^{\infty} \interp s^i)$, where $\interp s^0 \coloneqq 1$ and $\interp s^{i+1} \coloneqq \interp s^{i} \compo \interp s$,

\item
$\interp {\D(s)} \coloneqq \operatorname{maximal}(\D \interp s )$.
\end{enumerate}
\end{definition}

Note that $\interp a$, for $a \in \Sigma$, $\interp 0$, and $\interp 1$, contain only reduced trees, and  $\cup$ preserves this property on sets of trees (as do the lifted $\compo$ and $\D$, by definition). Hence the maximal operation is applicable whenever it is used in \Cref{standard}, and standard interpretations contain only reduced trees.

\begin{definition}
Let $\Sigma$ be an alphabet. A set of pointed $\Sigma$-labelled rooted trees is \defn{regular} if it is the standard tree interpretation of some $\{\compo, \bjoin, \kstar, 0, 1, \D\}$-term.\footnote{It is not claimed that this notion of regular sets of \emph{pointed} trees is the same as the notion of a \emph{regular tree language} coming from the theory of tree automata \cite{handbookformallanguages}.}
\end{definition}

We can think of a regular set $L$ of trees as a concise record of all the reduced trees in the downward-closed set $\down L$ (with respect to the $\leq$ ordering). In this view (thinking of $L$ as $\down L$), the operation $+$ corresponds to the real set union operation, and $\compo$ and $\D$ correspond to pointwise application of the operations of \Cref{operations}. The advantage of using the arrangement of \Cref{standard} is that regular sets remain \emph{finite} until such time that Kleene star is used. That the partial order $\leq$ on reduced trees is Noetherian ensures there are `enough' elements in $\operatorname{maximal}(\bigcup_{i = 1}^{\infty} \interp s^i)$ for that to be a sensible definition of ${\interp s}^*$ (as demonstrated in the proof of the next lemma).

The following lemma is the analogue of \Cref{path0}.

\begin{lemma}\label{path}
Let $\algebra A$ be a $\{\compo, \bjoin, \kstar, 0, 1, \D\}$-algebra of binary relations, with base $X$. Let $t$ be a $\{\compo, \bjoin, \kstar, 0, 1, \D\}$-term, and let $f$ be an assignment of elements of $\algebra A$ to the variables in $t$. Then for any $x, y \in X$, the following are equivalent.
\begin{enumerate}
\item\label{interpretation}
The pair $(x, y)$ belongs to the model-theoretic interpretation of $t$ under the assignment $f$.

\item\label{there_exists}
There is a tree $T$ in $\interp t$ and a homomorphism of $T$ into $(X, f)$ such that the root of $T$ is mapped to $x$ and the point of $T$ is mapped to $y$.
\end{enumerate}
\end{lemma}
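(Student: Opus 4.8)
The plan is to prove \Cref{path} by structural induction on the term $t$, exactly paralleling the proof of \Cref{path0} but threading the existential quantifier over trees in $\interp t$ through each case, and handling the two new operators $\bjoin$ and $\kstar$. The base cases ($t$ a variable $a$, $t = 1$, $t = 0$) are immediate: $\interp a = \{\vec a\}$, $\interp 1 = \{\trivial\}$, $\interp 0 = \emptyset$, so \ref{there_exists} holds exactly when a homomorphism of the single available tree exists (resp.\ never, in the $0$ case), which matches \ref{interpretation}. The $\compo$ and $\D$ cases also run as in \Cref{path0}, but with a twist: since $\interp{s \compo t} = \operatorname{maximal}(\interp s \compo \interp t)$ rather than $\interp s \compo \interp t$ itself, I need the observation that replacing a set of reduced trees by its $\leq$-maximal elements does not change which pairs $(x,y)$ are `connected' --- because by \eqref{homomorphism}, $T_1 \leq T_2$ means there is a homomorphism $T_2 \to T_1$, so if $T_1$ admits a homomorphism into $(X,f)$ (with root$\,\mapsto x$, point$\,\mapsto y$) then so does any $T_2 \geq T_1$, and in particular some maximal one above it. Likewise passing between a tree and its reduced form preserves connectedness, since homomorphisms exist in both directions (\Cref{order2}). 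So in each inductive step I may freely work with $\interp s \compo \interp t$, $\D\interp s$, etc., before taking maximal elements.

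For the union case $t = s_1 \bjoin s_2$: $(x,y)$ satisfies $t$ iff it satisfies $s_1$ or $s_2$, iff (by induction) some tree in $\interp{s_1} \cup \interp{s_2}$ admits the required homomorphism, iff (by the maximal-elements observation above) some tree in $\operatorname{maximal}(\interp{s_1}\cup\interp{s_2}) = \interp t$ does. For the Kleene star case $t = s^\kstar$: $(x,y)$ satisfies $s^\kstar$ iff there is some $n \in \mathbb N$ and $x_0 = x, \dots, x_n = y$ with each $(x_{i-1},x_i)$ satisfying $s$ (for $n = 0$ this means $x = y$, handled by $\interp s^0 = 1$, i.e.\ the tree $\trivial$). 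By the inductive hypothesis each such step is witnessed by a tree in $\interp s$ with a homomorphism into $(X,f)$; gluing these $n$ trees along their point/root identifications (as in the definition of pointed tree concatenation, \Cref{operations}) and reducing produces a tree in $\interp s^n$ admitting a homomorphism into $(X,f)$ with root$\,\mapsto x$, point$\,\mapsto y$ --- here I reuse the fact that concatenation of trees corresponds to composition of the connectedness relations, which is essentially what the $\compo$-case of this very induction establishes. Conversely any tree in $\interp s^n$ decomposes as such a concatenation, so a homomorphism of it yields the chain $x_0, \dots, x_n$. Finally, $\interp{s^\kstar} = \operatorname{maximal}(\bigcup_{i\geq 1}\interp s^i)$ (together with $\trivial \in \interp s^0$ folded in via the $i=1$ case when $s$ can be $1$, or noted separately), and the maximal-elements observation again shows this does not lose any connected pairs; crucially, Noetherianity of $\leq$ on reduced trees guarantees that every tree in $\bigcup_i \interp s^i$ lies below some maximal one, so the supremum is genuinely achieved.

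The main obstacle I anticipate is purely bookkeeping rather than conceptual: making sure the `$\operatorname{maximal}$ does not change connectedness' lemma is stated and used cleanly in every case (composition, domain, union, and star), and being careful in the star case that the base level $n = 0$ is accounted for --- the definition puts $\interp s^0 = 1 = \{\trivial\}$ but the union in clause (6) of \Cref{standard} starts at $i = 1$, so I should check that whenever $(x,x)$ can be reached by the empty chain, either $\trivial$ is dominated by some tree already in $\bigcup_{i\geq 1}\interp s^i$ or, if not, note that $(x,x)$ is anyway always in the model-theoretic interpretation of $s^\kstar$ and a homomorphism of $\trivial$ always exists --- so I should double-check whether $\interp{s^\kstar}$ as defined always contains a tree $\leq$-above $\trivial$; if $\interp s \ni \trivial$ (e.g.\ $s = 1$) it does, and otherwise I may need the convention that $\operatorname{maximal}$ is applied to $\bigcup_{i\geq 0}$ or handle $\trivial$ explicitly. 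Apart from that, the proof is a routine induction and I would present it in the same compressed style as the proof of \Cref{path0}.
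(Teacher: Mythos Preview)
Your approach is essentially the same as the paper's: structural induction on terms, with the $\kstar$ case handled by gluing the witnessing trees $S_1,\dots,S_n$, passing to an element of $\interp s^n$, and then invoking Noetherianity of $\leq$ to land in $\operatorname{maximal}(\bigcup_i \interp s^i)$. The paper's own proof is terser---it writes out only the direction \ref{interpretation} $\Rightarrow$ \ref{there_exists} for the $\kstar$ case and leaves the rest implicit---but the argument is the same.

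Your concern about the $n=0$ base of the star is well founded and is not addressed in the paper's proof either: with $\interp{s^\kstar}$ defined as $\operatorname{maximal}(\bigcup_{i=1}^\infty \interp s^i)$, the tree $\trivial$ need not lie below any element of the union (take $s=a$), yet $(x,x)$ always satisfies $s^\kstar$ relationally. This is almost certainly a typo in \Cref{standard}---the union should start at $i=0$---and once corrected your argument (and the paper's) goes through.
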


\begin{proof}
Structural induction on terms. We give the details for the $\kstar$ case in the direction \ref{interpretation} $\implies$ \ref{there_exists}, as this case is not entirely trivial. So suppose that $t = s^*$, that condition \ref{interpretation} holds for $t$, and that whenever condition \ref{interpretation} holds for $s$, condition \ref{there_exists} holds for $s$. Then since $(x, y)$ belongs to the model-theoretic interpretation of $s^*$, there are $z_0, \dots, z_n$ with $z_0 = x$, $z_n = y$, and each $(z_i, z_{i+1})$ belonging to the interpretation of $s$. Hence there are trees $S_1, \dots, S_n \in \interp s$ and homomorphisms mapping each $S_i$ into $(X, f)$ with the root mapping to $z_{i-1}$ and point mapping to $z_i$. By \eqref{homomorphism}, there is a homomorphism of the reduced tree $(\dots(S_1 \compo S_2) \compo \dots \compo S_n)$ into $(X, f)$ mapping the root to $x$ and the point to $y$. By \eqref{homomorphism} again (and an induction up to $n$), there is a $T' \in \interp{s}^n$ with $T' \geq (\dots(S_1 \compo S_2) \compo \dots \compo S_n)$ and a homomorphism of $T'$ into $(X, f)$ mapping the root to $x$ and the point to $y$. Then $T' \in \bigcup_{j = 1}^{\infty} \interp s^j$, and since the partial order $\leq$ on reduced trees is Noetherian, there exists a $T \in \interp s^* \coloneqq \operatorname{maximal}(\bigcup_{i = 1}^{\infty} \interp s^i)$ with $T \geq T'$. Such a $T$ fulfils condition \ref{there_exists}, so we are done.
\end{proof}

\begin{proposition}[soundness with respect to relations]\label{sound}
For any pair $s$ and $t$ of $\{\compo, \bjoin, \kstar, 0, 1, \D\}$-terms 
\[\interp s = \interp t \implies \operatorname{Rel}(\compo, \bjoin, \kstar, 0, 1, \D) \models s = t.\]
\end{proposition}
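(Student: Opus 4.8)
The plan is to derive this immediately from \Cref{path}, which already contains all the substance. Fix an arbitrary $\{\compo, \bjoin, \kstar, 0, 1, \D\}$-algebra of binary relations $\algebra A$ with base $X$, together with an arbitrary assignment $f$ of elements of $\algebra A$ to the variables occurring in $s$ and $t$. It suffices to show that $s$ and $t$ have the same model-theoretic interpretation $\interp s^{\algebra A, f} = \interp t^{\algebra A, f}$; since $\algebra A$ and $f$ are arbitrary, this yields $\operatorname{Rel}(\compo, \bjoin, \kstar, 0, 1, \D) \models s = t$.

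So take any $x, y \in X$. By \Cref{path} applied to $s$, the pair $(x, y)$ lies in $\interp s^{\algebra A, f}$ if and only if there is a tree $T \in \interp s$ and a homomorphism of $T$ into the relational structure $(X, f)$ sending the root of $T$ to $x$ and the point of $T$ to $y$. By the hypothesis $\interp s = \interp t$, the set of trees $\interp s$ is literally equal to the set of trees $\interp t$, so this condition holds for $\interp s$ precisely when it holds for $\interp t$. Applying \Cref{path} to $t$ in the converse direction, that condition is in turn equivalent to $(x, y) \in \interp t^{\algebra A, f}$. Hence $(x, y) \in \interp s^{\algebra A, f} \iff (x, y) \in \interp t^{\algebra A, f}$, and as $x$ and $y$ were arbitrary we conclude $\interp s^{\algebra A, f} = \interp t^{\algebra A, f}$.

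There is essentially no obstacle here: the real work is in \Cref{path} (and, behind it, the fact that a tree and its reduced form admit homomorphisms in both directions, so the $\operatorname{maximal}$ operations of \Cref{standard} discard no relational information). The only things to check, both routine, are that $\interp s$ and $\interp t$ consist of reduced pointed trees, so that \Cref{path} is applicable (this was noted just after \Cref{standard}), and that $\interp t^{\algebra A, f}$ is independent of which algebra of relations containing $\{a^f \mid a \in \Sigma\}$ one works in (observed in \Cref{preliminaries}). This argument mirrors exactly how \Cref{sound0} is obtained from \Cref{path0} in the previous section.
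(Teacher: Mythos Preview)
Your argument is correct and follows essentially the same approach as the paper's own proof: both invoke \Cref{path} twice, using the hypothesis $\interp s = \interp t$ to pass between the two tree sets, and conclude by universally quantifying over $x, y$, $\algebra A$, and $f$. The additional remarks you make about reduced trees and the well-definedness of $\interp t^{\algebra A, f}$ are accurate but not needed beyond what the paper already establishes.
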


\begin{proof}
Suppose $\interp s = \interp t$. Let $\algebra A$ be a $\{\compo, \bjoin, \kstar, 0, 1, \D\}$-algebra of binary relations, with base $X$. Let $f$ be an assignment of elements of $\algebra A$ to the variables appearing in $s = t$. Write $\interp {\, \cdot\, }^{\algebra A, f}$ for the model-theoretic interpretations in $\algebra A$ under $f$. Then by \Cref{path}, for any $x, y \in X$, we have that $(x, y) \in \interp {s}^{\algebra A, f}$ if and only if there is a $T \in \interp s$ with $T$ connecting $x$ and $y$. As $\interp s = \interp t$, this is equivalent to there being a $T \in \interp t$ with $T$ connecting $x$ and $y$, which in turn is equivalent, by \Cref{path} again, to having $(x, y) \in \interp {t}^{\algebra A, f}$. As $x$ and $y$ were arbitrary, we have $\interp {s}^{\algebra A, f} = \interp {t}^{\algebra A, f}$. As $\algebra A$ and $f$ were arbitrary, we conclude that $\operatorname{Rel}(\compo, \bjoin, \kstar, 0, 1, \D) \models s = t$.
\end{proof}

\begin{proposition}[completeness with respect to relations]\label{complete}
For any pair $s$ and $t$ of $\{\compo, \bjoin, \kstar, 0, 1, \D\}$-terms 
\[\operatorname{Rel}(\compo, \bjoin, \kstar, 0, 1, \D) \models s = t \implies \interp s = \interp t.\]
\end{proposition}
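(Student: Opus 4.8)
The plan is to mirror the proof of \Cref{complete0}, replacing \Cref{path0} by its analogue \Cref{path}, and inserting one short order-theoretic observation to pass from an inclusion of downward closures to an equality of the antichains $\interp s$ and $\interp t$.

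First I would record that observation. Each clause of \Cref{standard} either yields a singleton, yields $\emptyset$, or explicitly applies $\operatorname{maximal}(\,\cdot\,)$, so every standard interpretation $\interp u$ is a $\leq$-antichain of reduced trees (using antisymmetry of $\leq$ on reduced trees, \Cref{order}). Writing $\down$ for downward closure under $\leq$, a routine check then gives $\interp u = \operatorname{maximal}(\down \interp u)$: every $T \in \interp u$ is $\leq$-maximal in $\down\interp u$ because anything in $\down\interp u$ lies below some member of the antichain $\interp u$; and conversely every $\leq$-maximal element of $\down\interp u$ already lies in $\interp u$, again by antisymmetry. Consequently, to conclude $\interp s = \interp t$ it suffices to prove $\down\interp s = \down\interp t$, i.e.\ that every tree of $\interp s$ lies $\leq$-below some tree of $\interp t$, and symmetrically.

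Now suppose $\operatorname{Rel}(\compo, \bjoin, \kstar, 0, 1, \D) \models s = t$, and let $T \in \interp s$, with root $r$ and point $p$. View $T$ as a relational structure $(X, f)$ in the usual way (vertices, labelled parent--child edges), and let $\algebra A$ be the algebra of all binary relations on $X$, which is closed under all the operations of the signature and for which the model-theoretic interpretation of a term under $f$ does not depend on the choice of algebra. Applying the implication \ref{there_exists} $\implies$ \ref{interpretation} of \Cref{path} to the tree $T$ itself, together with the identity homomorphism $T \to T$, we get that $(r, p)$ belongs to the interpretation of $s$ in $(\algebra A, f)$. By the assumed validity of $s = t$, the pair $(r, p)$ then belongs to the interpretation of $t$ in $(\algebra A, f)$ as well. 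Applying the implication \ref{interpretation} $\implies$ \ref{there_exists} of \Cref{path}, there is a tree $T' \in \interp t$ and a homomorphism $T' \to (X, f) = T$ sending root to $r$ and point to $p$; that is, a homomorphism of pointed labelled rooted trees $T' \to T$, so $T \leq T'$ by \eqref{homomorphism}. Hence $\interp s \subseteq \down\interp t$, and by symmetry $\interp t \subseteq \down\interp s$, so $\down\interp s = \down\interp t$, and the previous paragraph yields $\interp s = \interp t$.

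The genuinely non-trivial work has already been done in \Cref{path} (in particular its Kleene-star case), so I do not expect a real obstacle here. The one point requiring care is the distinction between the antichains $\interp s, \interp t$ produced by \Cref{standard} and their downward closures: \Cref{path} only directly compares the latter, and the order-theoretic remark of the second paragraph is exactly what bridges that gap.
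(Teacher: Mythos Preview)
Your proof is correct and follows essentially the same approach as the paper: use \Cref{path} with the identity homomorphism on a tree $T\in\interp s$ viewed as a relational structure to obtain some $T'\in\interp t$ with $T\leq T'$, then finish by an order-theoretic argument exploiting that $\interp s$ and $\interp t$ are $\leq$-antichains of reduced trees. The only cosmetic difference is in that last step: the paper argues directly that for $S\in\interp s$ one finds $T\in\interp t$ and then $S'\in\interp s$ with $S\leq T\leq S'$, so $S=S'$ by the antichain property and hence $S=T\in\interp t$; you instead pass through $\down\interp s=\down\interp t$ and recover the antichains via $\interp u=\operatorname{maximal}(\down\interp u)$, which is an equivalent packaging of the same observation.
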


\begin{proof}
Just like the proof of \Cref{complete0}. Given $S \in \interp s$, we obtain the existence of a $T \in \interp t$ with $S \leq T$. By symmetry, there is an $S' \in \interp s$ with $T \leq S'$. Since $\interp s$ is a $\leq$-antichain, $S = T$. Hence $\interp s \subseteq \interp t$. By symmetry, also $\interp t \subseteq \interp s$.
\end{proof}

With \Cref{sound} and \Cref{complete}, we can now complete the proof of \Cref{main} in the familiar way.

\begin{theorem}\label{main}
Let $\Sigma$ be an alphabet, and let $\mathcal R_\Sigma$ be the set of reduced pointed $\Sigma$-labelled rooted trees. Then the free $ \operatorname{Rel}(\compo, \bjoin, \kstar, 0, 1, \D)$-algebra over $\Sigma$ has as its universe all the regular subsets of $\mathcal R_\Sigma$. The operations are the following, where $L$, $L_1$, and $L_2$, are regular sets of reduced trees.
\begin{enumerate}
\item
$ 0 \coloneqq \emptyset$,

\item
$ 1 \coloneqq \{\trivial\}$,

\item
$L_1 \bjoin L_2 \coloneqq \operatorname{maximal}(L_1 \cup L_2)$,
\item
$L_1 \compo L_2 \coloneqq \operatorname{maximal}( L_1 \compo L_2)$,
\item
$L^* \coloneqq \operatorname{maximal}(\bigcup_{i = 1}^{\infty} L^i)$, where $L^0 \coloneqq \{\trivial\}$ and $L^{i+1} \coloneqq L^{i} \compo L$,

\item
$\D(L) \coloneqq \operatorname{maximal}( \D (L) )$.
\end{enumerate}
\end{theorem}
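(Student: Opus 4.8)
The plan is to mirror the proof of \Cref{main0} essentially verbatim, now drawing on \Cref{path}, \Cref{sound}, and \Cref{complete} in place of their single-tree counterparts. First I would check that the regular subsets of $\mathcal R_\Sigma$, equipped with the six operations and constants listed in the statement, genuinely form an algebra of the signature $\{\compo, \bjoin, \kstar, 0, 1, \D\}$: closure is immediate from \Cref{standard}, since if $L_1 = \interp{s_1}$ and $L_2 = \interp{s_2}$ then $\operatorname{maximal}(L_1 \cup L_2) = \interp{s_1 \bjoin s_2}$, $\operatorname{maximal}(L_1 \compo L_2) = \interp{s_1 \compo s_2}$, $\operatorname{maximal}(\bigcup_{i\ge 1} L_1^i) = \interp{s_1^*}$, $\operatorname{maximal}(\D L_1) = \interp{\D(s_1)}$, while $\emptyset = \interp 0$ and $\{\trivial\} = \interp 1$. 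This same clause-by-clause matching shows that $\interp{\,\cdot\,}$, read as a map from the term algebra over $\Sigma$ into this algebra of regular sets, is a homomorphism, and it is surjective by the very definition of `regular'. (The one delicate point, already handled in the discussion following \Cref{standard} and inside the proof of \Cref{path}, is that Noetherianity of $\leq$ on reduced trees guarantees $\operatorname{maximal}(\bigcup_{i\ge 1} L^i)$ is a bona fide set of reduced trees, so the $\kstar$ clause makes sense.)

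Next I would invoke the first isomorphism theorem of universal algebra: the algebra of regular sets is isomorphic to the quotient $\algebra Q$ of the term algebra over $\Sigma$ by the kernel congruence $\sim$ of $\interp{\,\cdot\,}$, namely $s \sim t \iff \interp s = \interp t$. By \Cref{sound} and \Cref{complete}, this congruence is exactly equational validity in $\operatorname{Rel}(\compo, \bjoin, \kstar, 0, 1, \D)$. Since $\operatorname{Rel}(\compo, \bjoin, \kstar, 0, 1, \D)$ is closed under subalgebras and products and hence contains its free algebras, the basic fact that the free algebra over $\Sigma$ of such a class is precisely the term algebra modulo the congruence of its valid identities yields that $\algebra Q$ — equivalently, the algebra of all regular subsets of $\mathcal R_\Sigma$ with the stated operations — is the free $\operatorname{Rel}(\compo, \bjoin, \kstar, 0, 1, \D)$-algebra over $\Sigma$.

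I do not anticipate a real obstacle: all the substance has been front-loaded into \Cref{path}, \Cref{sound}, and \Cref{complete}, and what remains is the routine `first isomorphism theorem' packaging already used for \Cref{main0}. If anything needs care, it is merely making the clause-by-clause agreement between the operations displayed in the statement and those of \Cref{standard} explicit, so that $\interp{\,\cdot\,}$ is certified to be a homomorphism \emph{onto the stated algebra}, rather than just a map whose image happens to consist of the regular sets.
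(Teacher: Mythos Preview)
Your proposal is correct and matches the paper's approach exactly: the paper's proof of this theorem consists of the single line ``Similar to the proof of \Cref{main0}'', and what you have written is precisely that similarity spelled out, invoking \Cref{sound} and \Cref{complete} in place of their single-tree analogues and packaging via the first isomorphism theorem. Your extra care in verifying clause-by-clause that $\interp{\,\cdot\,}$ is a surjective homomorphism onto the stated algebra is the analogue of \Cref{observation} for the set-valued case, and is appropriate.
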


\begin{proof}
Similar to the proof of \Cref{main0}.
\end{proof}

\begin{corollary}
Let $\Sigma$ and $\mathcal R_\Sigma$ be as in \Cref{main}.

\begin{itemize}
\item
The free $\operatorname{Rel}(\compo, \bjoin, \kstar, 0, \D)$-algebra over $\Sigma$ consists of the regular subsets of $\mathcal R_\Sigma$ that do not contain the trivial tree $\trivial$.

\item
The free $\operatorname{Rel}(\compo, \bjoin, \kstar, 1, \D)$-algebra over $\Sigma$ consists of the nonempty regular subsets of $\mathcal R_\Sigma$.

\item
The free $\operatorname{Rel}(\compo, \bjoin, \kstar, \D)$-algebra over $\Sigma$ consists of the nonempty regular subsets of $\mathcal R_\Sigma$ that do not contain $\trivial$.
\end{itemize}
\end{corollary}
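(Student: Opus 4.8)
The plan is to reprise, for each of the three reduced signatures in turn, the argument used to deduce \Cref{constants} from \Cref{main0}. Fix one such signature $\tau\subseteq\{\compo,\bjoin,\kstar,0,1,\D\}$ and write $\mathcal S_\tau$ for the set of trees claimed to carry the free $\operatorname{Rel}(\tau)$-algebra over $\Sigma$ --- the regular sets avoiding $\trivial$, the nonempty regular sets, or the nonempty regular sets avoiding $\trivial$, respectively. I would argue in three steps. First, show that $\mathcal S_\tau$ is precisely the set of standard tree interpretations $\interp t$ of $\tau$-terms $t$; equivalently, that $\mathcal S_\tau$ is closed under the operations of \Cref{main} restricted to $\tau$ and is generated by $\{\vec a\mid a\in\Sigma\}$. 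Second, relativise the soundness and completeness statements \Cref{sound} and \Cref{complete} to $\tau$. Third, conclude by the first isomorphism theorem of universal algebra, exactly as in the proofs of \Cref{main0} and \Cref{main}: $\mathcal S_\tau$, being generated by $\Sigma$, is the quotient of the $\tau$-term algebra over $\Sigma$ by the congruence $s\sim t\iff\interp s=\interp t$, which by the second step is equational validity in $\operatorname{Rel}(\tau)$, so the quotient is the free algebra.

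The first step is where the work lies. In the two $0$-free cases, I would first check by structural induction that $\interp t\neq\emptyset$ for every $\tau$-term $t$: the bases $\interp a=\{\vec a\}$ and $\interp 1=\{\trivial\}$ are nonempty, the clauses for $\bjoin$, $\compo$ and $\D$ visibly preserve nonemptiness, and $\interp s^*=\operatorname{maximal}(\bigcup_{i=1}^{\infty}\interp s^i)$ is nonempty whenever $\interp s$ is, since $\leq$ is Noetherian on reduced trees so every member of $\interp s=\interp s^1$ lies below some member of $\interp s^*$. Conversely, every nonempty regular set $L=\interp t$ equals $\interp{t'}$ for some $0$-free $t'$: this is a routine structural induction, since when $\interp t\neq\emptyset$ every subterm of $t$ on which $\interp t$ actually depends has nonempty interpretation. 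In the two $1$-free cases, I would symmetrically check that $\trivial\notin\interp t$ for every $\tau$-term $t$; the crux is the elementary fact that $T_1\compo T_2=\trivial$ forces $T_1=T_2=\trivial$ --- the tree obtained by gluing $T_1$ and $T_2$ keeps an edge at its root unless both factors are $\trivial$, and reduction never deletes the last edge at a vertex --- whence $\trivial\notin L^i$ for all $i\geq 1$ and so $\trivial\notin L^*$. The converse, that every regular set avoiding $\trivial$ equals $\interp{t'}$ for some $1$-free $t'$, is again a structural induction rewriting away occurrences of $1$ via $1\compo u=u=u\compo 1$, $\D(1)=1$ and $1^*=1$, with a mild strengthening of the induction hypothesis needed for the $\compo$ clause (produce, for each term $u$, a $1$-free term with interpretation $\interp u\setminus\{\trivial\}$). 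For $\{\compo,\bjoin,\kstar,\D\}$ one intersects the $0$-free and $1$-free descriptions.

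For the second step, relativised completeness $\operatorname{Rel}(\tau)\models s=t\implies\interp s=\interp t$ is immediate from \Cref{complete}: the reduct to $\tau$ of any $\{\compo,\bjoin,\kstar,0,1,\D\}$-algebra of binary relations is an algebra of binary relations of signature $\tau$, so $\operatorname{Rel}(\tau)\models s=t$ entails $\operatorname{Rel}(\compo,\bjoin,\kstar,0,1,\D)\models s=t$. Relativised soundness $\interp s=\interp t\implies\operatorname{Rel}(\tau)\models s=t$ follows from \Cref{sound} together with the fact that every $\operatorname{Rel}(\tau)$-algebra is a subalgebra of the reduct to $\tau$ of some $\operatorname{Rel}(\compo,\bjoin,\kstar,0,1,\D)$-algebra --- namely the algebra of all binary relations on its base --- and equations pass to subalgebras. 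I expect the only real obstacle to be the closure half of the first step, and within it the clauses for $\kstar$: confirming that the operator $\operatorname{maximal}(\bigcup_{i=1}^{\infty}(\cdot)^i)$ neither collapses a nonempty set of reduced trees to $\emptyset$ nor manufactures $\trivial$ from trees that lack it. Both of these reduce to the two facts isolated above: $T_1\compo T_2=\trivial$ only when $T_1=T_2=\trivial$, and $\leq$ is Noetherian on reduced trees.
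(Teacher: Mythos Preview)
Your proposal is correct and is precisely the argument the paper has in mind: the paper's own proof is the single line ``Similar to the proof of \Cref{constants}'', and what you have written is a faithful unpacking of that sentence, carried over from the single-tree setting to the setting of regular sets of trees. Your identification of the two genuinely new checks needed for the $\kstar$ clause---Noetherianity of $\leq$ to guarantee $\operatorname{maximal}(\bigcup_{i\geq 1} L^i)\neq\emptyset$ when $L\neq\emptyset$, and the fact that $T_1\compo T_2=\trivial$ forces $T_1=T_2=\trivial$ so that $\trivial$ cannot appear in any $L^i$ with $i\geq 1$---is exactly what the one-line proof is suppressing.
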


\begin{proof}
Similar to the proof of \Cref{constants}.
\end{proof}

\begin{corollary}\label{finite}
Let $\Sigma$ and $\mathcal R_\Sigma$ be as in \Cref{main}. Then the free $\operatorname{Rel}(\compo, \bjoin, 0, 1,\allowbreak \D)$-algebra over $\Sigma$ consists of all finite regular subsets of $\mathcal R_\Sigma$.
\end{corollary}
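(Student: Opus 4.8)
The plan is to isolate two observations about the standard tree interpretation of \Cref{standard} and then assemble the result exactly as \Cref{main0} and \Cref{main} were assembled. The first observation is that $\interp t$ is a \emph{finite} set for every $\{\compo, \bjoin, 0, 1, \D\}$-term $t$, proved by a routine structural induction: the base cases $\interp a = \{\vec a\}$, $\interp 0 = \emptyset$, $\interp 1 = \{\trivial\}$ are finite, and the clauses for $\bjoin$, $\compo$, and $\D$ send finite sets to finite sets because $\operatorname{maximal}$ never enlarges a set and the lifted $\compo$ and $\D$ act elementwise. The only clause of \Cref{standard} that can produce an infinite interpretation is the one for $\kstar$, which is now absent. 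In particular the finite regular subsets of $\mathcal R_\Sigma$ are closed under the operations of \Cref{main} restricted to $\{\compo, \bjoin, 0, 1, \D\}$, so they form a subalgebra.

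The second observation is the converse: every finite regular set $L \subseteq \mathcal R_\Sigma$ is the standard interpretation of some $\{\compo, \bjoin, 0, 1, \D\}$-term. Being regular, $L$ is a standard interpretation, hence a finite $\leq$-antichain of reduced pointed trees (every composite clause of \Cref{standard} applies $\operatorname{maximal}$, whose output is a $\leq$-antichain by \Cref{order}); write $L = \{T_1, \dots, T_k\}$. By \Cref{observation} each $T_i$ equals $\tinterp{t_i}$ for some $\{\compo, 1, \D\}$-term $t_i$, and a one-line induction through \Cref{standard} shows $\interp{t_i} = \{\tinterp{t_i}\} = \{T_i\}$ for such terms. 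As $L$ is an antichain, $L = \interp{t_1 \bjoin \dots \bjoin t_k}$ when $k \geq 1$, and $L = \interp 0$ when $k = 0$; either way $L$ is the interpretation of a star-free term.

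With these in hand I would finish as in \Cref{main0} and \Cref{main}. For star-free $s$ and $t$, validity in $\operatorname{Rel}(\compo, \bjoin, 0, 1, \D)$ coincides with validity in $\operatorname{Rel}(\compo, \bjoin, \kstar, 0, 1, \D)$: each algebra of the larger class restricts to one of the smaller class, each algebra of the smaller class is a subalgebra of the $\{\compo,\bjoin,0,1,\D\}$-reduct of the algebra of all binary relations on its base (which carries $\kstar$), and in both directions the interpretation of a star-free term is unchanged, so equational validity transfers. Hence \Cref{sound} and \Cref{complete} give $\interp s = \interp t \iff \operatorname{Rel}(\compo, \bjoin, 0, 1, \D) \models s = t$, so the term algebra over $\{\compo, \bjoin, 0, 1, \D\}$ in variables $\Sigma$, quotiented by equational validity in $\operatorname{Rel}(\compo, \bjoin, 0, 1, \D)$, is carried by $\interp{\,\cdot\,}$ isomorphically onto its image; by the two observations this image is precisely the algebra of finite regular subsets of $\mathcal R_\Sigma$, and the first isomorphism theorem of universal algebra identifies it as the free algebra sought. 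I expect no serious obstacle; the only point that is not purely mechanical is the second observation—that finiteness genuinely lets one dispense with $\kstar$—and it rests on the facts that regular sets are $\leq$-antichains and that every reduced pointed tree is a single-tree interpretation (\Cref{observation}).
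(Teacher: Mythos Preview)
Your proposal is correct and follows exactly the paper's approach: the paper's proof simply asserts that ``the regular subsets interpreting $\{\compo, \bjoin, 0, 1, \D\}$-terms are precisely the finite regular subsets'' and then invokes \Cref{sound} and \Cref{complete}, and you have written out the details behind that first sentence (your two observations) and made explicit why validity for star-free equations transfers between the two classes. The only nontrivial point you flag---that every finite regular set, being a finite $\leq$-antichain, can be recovered as $\interp{t_1 \bjoin \dots \bjoin t_k}$ via \Cref{observation}---is precisely the content the paper leaves implicit.
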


\begin{proof}
First note that the regular subsets interpreting $\{\compo, \bjoin, 0, 1, \D\}$-terms are precisely the finite regular subsets. Then soundness and completeness follow from \Cref{sound} and \Cref{complete} respectively.
\end{proof}

In \cite{mbacke2018completeness}, Mbacke proves (Theorem 5.3.3 there) that a certain finite equational theory over the signature $\{\compo, \bjoin, 0, 1, \D\}$---the theory of \emph{domain semirings}---is complete for the equational validities of what amounts to the algebras of trees identified in \Cref{finite}. Hence, we obtain the following corollary.

\begin{corollary}\label{combination}
The axioms of domain semirings provide a finite equational axiomatisation of the equational theory of $\operatorname{Rel}(\compo, \bjoin, 0, 1, \D)$. 
\end{corollary}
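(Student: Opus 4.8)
The plan is to combine \Cref{finite} with the completeness theorem of Mbacke \cite{mbacke2018completeness}. Recall that the theory of \emph{domain semirings} is a fixed finite set of equations in the signature $\{\compo, \bjoin, 0, 1, \D\}$: the idempotent-semiring axioms together with a handful of equations governing $\D$. These are our candidate axioms, and we must verify both soundness and completeness with respect to $\operatorname{Rel}(\compo, \bjoin, 0, 1, \D)$.

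Soundness is routine and well known: each domain semiring axiom can be checked directly against the relational semantics of \Cref{algebra} and \Cref{tests}, so $\operatorname{Rel}(\compo, \bjoin, 0, 1, \D)$ is a subclass of the domain semirings, and hence every equation derivable from the axioms holds throughout $\operatorname{Rel}(\compo, \bjoin, 0, 1, \D)$.

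For completeness, let $s = t$ be an equation in the signature $\{\compo, \bjoin, 0, 1, \D\}$ with $\operatorname{Rel}(\compo, \bjoin, 0, 1, \D) \models s = t$. By \Cref{finite} (whose proof specialises \Cref{sound} and \Cref{complete} to this $\kstar$-free sub-signature), an equation holds in $\operatorname{Rel}(\compo, \bjoin, 0, 1, \D)$ precisely when it holds in the algebra of all finite regular subsets of $\mathcal R_\Sigma$ under the operations of \Cref{main}; equivalently, precisely when $\interp s = \interp t$ for the standard tree interpretation of \Cref{standard}, with $\Sigma$ taken to contain the variables of $s$ and $t$. Mbacke's Theorem 5.3.3 asserts exactly that the domain semiring axioms are complete for this algebra of trees, so $s = t$ is derivable from those axioms. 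Together with soundness, this gives the claimed finite equational axiomatisation.

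The one point demanding care --- the force of the phrase ``what amounts to'' --- is the identification of Mbacke's algebra of trees with the free algebra of \Cref{finite}. Mbacke uses the graph-theoretic trees, considered only up to simulation equivalence (his counterpart of the preorder $\leq$) rather than reduced to canonical form, and his operations omit the reduction steps of \Cref{operations} and \Cref{standard}. One must therefore check that the quotient of Mbacke's tree algebra by simulation equivalence is isomorphic, as a $\{\compo, \bjoin, 0, 1, \D\}$-algebra, to our algebra of finite sets of reduced trees, with the operations corresponding. This follows from \Cref{order2} --- reduction selects a canonical representative of each $\leq$-class and is compatible with the operations --- together with the observation that a finite $\leq$-antichain of reduced trees is interchangeable with the $\leq$-downward closure of the corresponding simulation class of ``trees with a top''. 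I expect this identification, rather than anything in the soundness/completeness argument, to be the main --- indeed the only --- obstacle; once it is made explicit, the corollary follows at once.
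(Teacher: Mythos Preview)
Your proposal is correct and follows essentially the same route as the paper: combine \Cref{finite} with Mbacke's Theorem~5.3.3 to obtain completeness, with soundness being routine. The paper's own argument is just the one-line observation preceding the corollary; your version is simply more explicit, in particular in spelling out the identification behind ``what amounts to'', which the paper leaves to the reader.
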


In other words, the axioms of domain semirings are (sound and) \emph{equationally complete} for algebras of binary relations. In \cite{10.1007/978-3-540-78913-0_18}, Jipsen and Struth study the singly-generated free domain semiring. \Cref{finite} now subsumes the description of that paper, though that is not to say that these free algebras are uncomplicated objects.

The other main result of \cite{mbacke2018completeness} (Theorem 5.3.12 there) is an axiomatisation of the equational validities of our algebras of regular sets of trees. The axiomatisation used consists of the \emph{second-order} theory of star-continuous Kleene algebras, augmented with one \emph{additional} second-order axiom:
\[a \compo (\join_{b \in B}b) \compo c = \join_{b \in B}( a \compo b \compo c) \ \ \limplies\ \  a \compo (\join_{b \in B}\D (b)) \compo c = \join_{b \in B} (a \compo \D(b) \compo c),\]
where $\join$ indicates supremum. Unfortunately, this axiom is not sound for algebras of binary relations. (And so, in particular, it is not a consequence of the axioms of star-continuous Kleene algebras, which \emph{are} sound for relations.) Hence, we do not obtain an analogue of \Cref{combination} for the signature $\{\compo, \bjoin, \kstar, 0, 1, \D\}$.

\section{Automata, and closure under intersection}\label{automata}

It is well known that the set of regular languages over a finite alphabet $\Sigma$ is closed under complement with respect to $\Sigma^*$. It is clear that, over a (nonempty) finite alphabet, the regular sets of trees are not closed under the complement operation (with respect to the set of reduced trees). And, more meaningfully, this is true even in the view that a regular set $L$ represents the downward-closed set $\down L$ (since complement does not preserve downward closure). However, as we will show, the regular sets of trees \emph{are} closed under the following `intersection' operation.
\[L_1 \bmeet L_2 \coloneqq \operatorname{maximal}( \down L_1 \cap \down L_2)\]

In \cite{hellings2015relative}, and its extended journal version \cite{HELLINGS2020101467}, \emph{condition automata} are defined. They are an extension of finite-state automata designed specifically for working with relational queries that may contain $\D$ (among other tests such as range and antidomain). In this section, we will use a slightly simplified definition of condition automata.

\begin{definition}
A \defn{domain condition automaton} is a 6-tuple $(\Sigma, S, I, T, \delta, c)$, where $(\Sigma, S,\allowbreak I,\allowbreak T,\allowbreak \delta)$ is a finite-state automaton (nondeterministic, with $\epsilon$-transi\-tions permitted), and $c$ is a function that assigns a $\{\compo, \bjoin, \kstar, 0, 1, \D\}$-term (over $\Sigma$) to each state $S$.
\end{definition}

A domain condition automaton accepts a path $x_0 \overset{a_1}\to x_1 \overset{a_2}\to \dots \overset{a_n}\to x_n$ in a relational structure precisely when it is accepted by the finite-state automaton with a trace such that at each step the condition $\D(c(s))$, where $s$ is the current state, is satisfied at the corresponding vertex $x$ in the relational structure (or, to be correct, is satisfied by the pair $(x, x)$).

\begin{lemma}\label{small}
Let $t$ be a $\{\compo, \bjoin, \kstar, 0, 1, \D\}$-term, and let $T$ be a pointed labelled rooted tree, with root $r$ and point $p$. Then the pair $(r, p)$ in the relational structure $T$ satisfies $t$ if and only if there is a $T' \in \interp t$ with $T \leq T'$.
\end{lemma}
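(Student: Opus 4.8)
The plan is to prove this by structural induction on the term $t$, closely paralleling the proof of \Cref{path}, but now working in the single fixed relational structure $T$ rather than an arbitrary algebra of binary relations. In fact the cleanest route is to \emph{deduce} the lemma directly from \Cref{path}: apply \Cref{path} to the particular $\{\compo, \bjoin, \kstar, 0, 1, \D\}$-algebra of binary relations generated by the relational structure $T$ (that is, take $\algebra A$ to be any algebra of binary relations on the vertex set of $T$ containing $\{a^f \mid a \in \Sigma\}$, where $f$ is the natural assignment reading the edge-labels of $T$, as set up in \Cref{preliminaries}), and take $x = r$, $y = p$. Then \Cref{path} says that $(r, p) \in \interp t^{\algebra A, f}$ -- i.e.\ $(r,p)$ satisfies $t$ in $T$ -- if and only if there is a tree $S \in \interp t$ and a homomorphism $S \to T$ (as relational structures) sending the root of $S$ to $r$ and the point of $S$ to $p$.

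So it remains only to observe that, for a reduced tree $S \in \interp t$, the existence of such a root-and-point-preserving homomorphism $S \to T$ is equivalent to the existence of some $T' \in \interp t$ with $T \leq T'$. One direction: given a homomorphism $\theta \from S \to T$ preserving root and point, this is exactly a homomorphism of \emph{pointed} labelled rooted trees in the sense fixed in \Cref{trees} (noting that $r$ is the root of $T$ and $p$ is its point, so $\theta$ maps root to root and point to point as required), and by \eqref{homomorphism} this means $T \leq S$; taking $T' := S$ gives $T \leq T'$ with $T' \in \interp t$. Conversely, if $T' \in \interp t$ satisfies $T \leq T'$, then by \eqref{homomorphism} there is a homomorphism $T' \to T$ preserving root and point, which is precisely the witness required by clause \ref{there_exists} of \Cref{path}; so $(r,p)$ satisfies $t$. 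This closes the equivalence.

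The one point needing a little care -- and the main (minor) obstacle -- is the bookkeeping around \emph{pointedness}: \eqref{homomorphism} and the notion of tree homomorphism in \Cref{trees} were stated for possibly-pointed trees, and one must check that for a term $t$ every $T' \in \interp t$ is genuinely pointed (immediate from \Cref{standard} and an induction, since $\interp a$, $\interp 1$ consist of pointed trees and $\compo$, $\bjoin$, $\kstar$, $\D$ all preserve this), and that the root of $T$ coincides with where a tree homomorphism must send the root, which is automatic since in \Cref{trees} homomorphisms of pointed labelled rooted trees are by definition root-to-root and point-to-point. Given that, the proof is essentially a corollary of \Cref{path}, and I would present it as such rather than redoing the induction.
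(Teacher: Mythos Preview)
Your proposal is correct and takes essentially the same approach as the paper: the paper's proof is a one-line remark that this is ``just a specialisation of \Cref{path} (and the equivalence of the condition $T \leq T'$ with the existence of a homomorphism from $T'$ into $T$)''. Your write-up simply unpacks that remark in full, including the pointedness bookkeeping the paper leaves implicit.
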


\begin{proof}
This is just a specialisation of \Cref{path} (and the equivalence of the condition $T \leq T'$ with the existence of a homomorphism from $T'$ into $T$).
\end{proof}

\begin{proposition}
Let $\Sigma$ be an alphabet and let $L_1$ and $L_2$ be two sets of reduced pointed  $\Sigma$-labelled finite rooted trees. If $L_1$ and $L_2$ are regular, then $L_1 \bmeet L_2$ is regular.
\end{proposition}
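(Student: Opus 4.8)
The plan is to reduce the statement to exhibiting a single term, and then to obtain that term by a Kleene-theorem argument for domain condition automata combined with a product construction.

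First I would reduce. Since $L_1$ and $L_2$ are regular, fix $\{\compo, \bjoin, \kstar, 0, 1, \D\}$-terms $t_1, t_2$ with $\interp{t_1} = L_1$ and $\interp{t_2} = L_2$. By \Cref{small}, for any reduced pointed $\Sigma$-labelled rooted tree $T$ with root $r$ and point $p$, and for $i \in \{1, 2\}$, we have $T \in \down L_i$ if and only if the pair $(r, p)$ satisfies $t_i$ in the relational structure $T$; hence $\down L_1 \cap \down L_2$ is exactly the set of reduced pointed trees whose root--point pair satisfies both $t_1$ and $t_2$. It therefore suffices to produce a single $\{\compo, \bjoin, \kstar, 0, 1, \D\}$-term $t$ with the property that, in every relational structure (it is enough to handle the structures arising from trees), a pair $(x, y)$ satisfies $t$ if and only if it satisfies both $t_1$ and $t_2$. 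Indeed, for such a $t$, \Cref{small} gives $\down \interp t = \down L_1 \cap \down L_2$; and since $\interp t$ is a $\le$-antichain of reduced trees while $\le$ is Noetherian on reduced trees, every member of $\down \interp t$ lies below some $\le$-maximal member, which necessarily belongs to $\interp t$. So $\interp t = \operatorname{maximal}(\down \interp t) = \operatorname{maximal}(\down L_1 \cap \down L_2) = L_1 \bmeet L_2$, and $L_1 \bmeet L_2$ is regular.

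Next I would build the term $t$ using automata. By a Thompson-style translation of terms into domain condition automata (following \cite{hellings2015relative, HELLINGS2020101467}), obtain domain condition automata $\mathcal A_1$ and $\mathcal A_2$ such that, in every relational structure, $(x, y)$ satisfies $t_i$ if and only if $\mathcal A_i$ accepts some path from $x$ to $y$. Form the product automaton $\mathcal A$: its underlying $\epsilon$-NFA is the usual product of the underlying $\epsilon$-NFAs of $\mathcal A_1$ and $\mathcal A_2$ (synchronising on letter transitions, interleaving $\epsilon$-transitions), and its condition function assigns to a product state $(s_1, s_2)$ the term $\D(c_1(s_1)) \compo c_2(s_2)$, for which $\D(\D(c_1(s_1)) \compo c_2(s_2))$ holds at a vertex precisely when both $\D(c_1(s_1))$ and $\D(c_2(s_2))$ hold there. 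Then $\mathcal A$ accepts a path from $x$ to $y$ if and only if both $\mathcal A_1$ and $\mathcal A_2$ do. Finally, convert $\mathcal A$ back into a term $t$ by the converse Kleene-theorem construction for domain condition automata (state elimination: successively delete states, combining edge expressions with $\compo$, $\bjoin$, and $\kstar$, and, when routing through a state $s$, inserting the factor $\D(c(s))$), so that $(x, y)$ satisfies $t$ if and only if $\mathcal A$ accepts a path from $x$ to $y$, hence if and only if $(x, y)$ satisfies both $t_1$ and $t_2$. This $t$ is the term called for above.

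The main obstacle is the pair of Kleene-theorem translations for domain condition automata (terms to automata and back) together with the correctness of the product: the delicate point throughout is the interaction between $\epsilon$-transitions and the state-attached conditions, since one must thread conditions correctly when $\epsilon$-transitions are introduced, composed, or eliminated, and the product must synchronise letter moves while letting each factor make its own $\epsilon$-moves without dropping or duplicating condition checks. Much of this machinery is already available from \cite{hellings2015relative, HELLINGS2020101467}. One should also take care that the term/automaton equivalences hold over arbitrary relational structures, or at least over all structures coming from trees. Once these technical facts are in place, the product step and the concluding computation via \Cref{small} are routine.
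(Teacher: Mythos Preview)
Your proposal is correct and follows essentially the same route as the paper: both arguments pass from $t_1,t_2$ to domain condition automata, take a product automaton, convert back to a term $t$, and then use \Cref{small} to conclude $\interp t = \operatorname{maximal}(\down L_1 \cap \down L_2)$. The paper simply cites \cite{hellings2015relative} (Propositions~5 and~6) for the term--automaton translations and the product construction where you sketch these steps explicitly.
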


\begin{proof}
Let $L_1 = \interp {t_1}$ and $L_2 = \interp {t_2}$. Then by \cite[Proposition 5]{hellings2015relative}, there is a domain condition automaton $\mathcal A_1$ such that for any pointed labelled rooted tree $T$, with root $r$ and point $p$, the pair $(r, p)$ satisfies $t_1$ if and only if the path from $r$ to $p$ is accepted by $\mathcal A_1$. Similarly, there is such a domain condition automaton $\mathcal A_2$ for $t_2$. By \cite[Proposition 6]{hellings2015relative}, there is a domain condition automaton $\mathcal A$ that accepts a path if and only if that path is accepted by both $\mathcal A_1$ and $\mathcal A_2$. Then using \cite[Proposition 5]{hellings2015relative} in the other direction, applied to $\mathcal A$, there is a $\{\compo, \bjoin, \kstar, 0, 1, \D\}$-term $t$ such that for any pointed labelled rooted tree $T$, with root $r$ and point $p$, the pair $(r, p)$ satisfies $t$ if and only if it satisfies both $t_1$ and $t_2$.

Applying \Cref{small}, we have (for \emph{reduced} $T$) that $T \in \down \interp t$ if and only if $(r, p)$ satisfies $t$ if and only if  $(r, p)$ satisfies both $t_1$ and $t_2$ if and only if $T \in \down \interp {t_1}$ and $T \in \down \interp {t_2}$ if and only if  $T \in \down L_1 \cap \down L_2$. Hence $\down \interp t = \down L_1 \cap \down L_2$. So \[\interp t = \operatorname{maximal}(\down \interp t) = \operatorname{maximal}(\down L_1 \cap \down L_2) = L_1 \bmeet L_2.\qedhere\]
\end{proof}

Let $(\mathcal O(\mathcal R_\Sigma), {\cup}, {\cap})$ be the lattice of downward-closed subsets of $\mathcal R_\Sigma$. We now know that the sets of the form $\down L$, for regular $L$, form a sublattice of $(\mathcal O(\mathcal R_\Sigma), {\cup}, {\cap})$. Given that $(\mathcal O(\mathcal R_\Sigma), {\cup}, {\cap})$ is a Heyting algebra, this raises the further question of whether the sets of the form $\down L$ are closed under the implication operation of $(\mathcal O(\mathcal R_\Sigma), {\cup}, {\cap})$.

\begin{problem}\label{Heyting}
Are the regular sets of reduced trees closed under the following implication operation?\footnote{Thanks go to one of the anonymous referees for posing this question.}
\begin{align*}L_1 \limplies L_2 &\coloneqq \operatorname{maximal} \{ \bigcup\{K \in \mathcal O(\Sigma) \mid  K \cap \down L_1 \subseteq \down L_2\}\}
\end{align*}
\end{problem}

We conjecture that the answer to this problem is yes.

We pose one additional problem, relating to extra-order-theoretic \emph{composition} structure of the regular sets.

\begin{problem}
Are the regular sets of reduced trees closed under the following residuation operations?
\begin{align*}L_1 \mathbin{\setminus} L_2 &\coloneqq \operatorname{maximal} \{ T \in \mathcal R_\Sigma \mid \forall S \in \down L_1,\ S \compo T \in \down L_2\}\\
L_1 \mathbin{\minusset} L_2 &\coloneqq \operatorname{maximal} \{ T \in \mathcal R_\Sigma \mid \forall S \in \down L_2,\ T \compo S \in \down L_1\}
\end{align*}
\end{problem}

\section{Decidability of equational theory}\label{section:decide}

In this section we describe how to decide the validity of a $\{\compo, \bjoin, \allowbreak\kstar,\allowbreak 0, 1, \D\}$-equation with respect to relational semantics.

First, we give a definition of condition automata closer to that found in \cite{hellings2015relative} and \cite{HELLINGS2020101467}. Recall (\Cref{tests}) that $\A$ is a unary function symbol whose relational interpretation is the antidomain operation.

\begin{definition}
A \defn{condition automaton} is a 6-tuple $(\Sigma, S, I, T, \delta, c)$, where $(\Sigma, S,\allowbreak I,\allowbreak T,\allowbreak \delta)$ is a finite-state automaton (nondeterministic, with $\epsilon$-transitions permitted), and $c$ is a function that assigns a $\{\compo, \bjoin, \kstar, 0, 1, \A\}$-term (over $\Sigma$) to each state $S$.
\end{definition}

Acceptance for condition automata is defined just like acceptance for domain condition automata, where the symbol $\D$ is now shorthand for two applications of $\A$. 

\begin{theorem}\label{decidability}
The equational theory of the class of algebras of binary relations of the signature $\{\compo, \bjoin, \kstar, 0, 1, \D\}$ is decidable.
\end{theorem}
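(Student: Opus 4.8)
The plan is to reduce the validity of a $\{\compo, \bjoin, \kstar, 0, 1, \D\}$-equation $s = t$ to an equivalence problem between two condition automata, and then to reduce that in turn to an equivalence problem for ordinary finite-state automata. By \Cref{complete} and \Cref{sound}, the equation $s = t$ is valid over $\operatorname{Rel}(\compo, \bjoin, \kstar, 0, 1, \D)$ if and only if $\interp s = \interp t$, and since standard interpretations are $\leq$-antichains, this holds if and only if $\down \interp s = \down \interp t$. By \Cref{small}, for a reduced pointed tree $T$ with root $r$ and point $p$ we have $T \in \down \interp s$ if and only if the pair $(r, p)$ satisfies $s$ in the relational structure $T$; so it suffices to decide whether $s$ and $t$ are satisfied by exactly the same reduced pointed trees (equivalently, by exactly the same pointed trees, since every pointed tree has a reduced form that is $\leq$-equivalent to it, hence satisfies the same terms).

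First I would, using the construction of \cite[Proposition 5]{hellings2015relative} (in the direction from terms to automata), build condition automata $\mathcal A_s$ and $\mathcal A_t$ such that for every pointed labelled rooted tree $T$ with root $r$ and point $p$, the pair $(r, p)$ satisfies $s$ (respectively $t$) if and only if the path from $r$ to $p$ in $T$ is accepted by $\mathcal A_s$ (respectively $\mathcal A_t$). Then the question becomes: do $\mathcal A_s$ and $\mathcal A_t$ accept the same set of paths, where a ``path'' ranges over all root-to-point paths inside pointed labelled rooted trees? The key observation is that a root-to-point path inside a pointed tree is just a finite word over $\Sigma$ \emph{together with the branching context available at each vertex along that path}; the condition-automaton transitions read the word letters, while each state's condition $\D(c(s))$ is a test that must be satisfiable at the corresponding vertex using some subtree hanging off that vertex. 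I would make this precise by observing that, for a fixed word $w = a_1 \cdots a_n$, one can always choose the richest possible tree (attach, at every vertex of the $w$-labelled path, enough branching to satisfy any condition that \emph{can} be satisfied there), so that whether $\mathcal A_s$ ``accepts along $w$'' depends only on $w$ and on which conditions are satisfiable at a vertex with a given outgoing alphabet — and satisfiability of a $\{\compo,\bjoin,\kstar,0,1,\A\}$-term at a vertex is itself something one can pre-decide. Concretely, replacing $\D$ by $\A\A$ and using the condition-automaton formulation, I would recursively reduce each condition term to a smaller condition automaton, so the whole satisfiability question bottoms out.

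The next step is to convert $\mathcal A_s$ and $\mathcal A_t$ into ordinary nondeterministic finite-state automata $\mathcal B_s$, $\mathcal B_t$ over $\Sigma$ (or over a suitably enlarged alphabet recording, at each position, the outgoing-label set of the corresponding vertex) that accept a word precisely when the corresponding condition automaton accepts some (equivalently, the ``maximal'') tree-path built over that word. This is where the recursion on term structure of the state conditions is doing the real work: the condition $\D(c(s))$ attached to a state is checked at a vertex, and whether it holds there is again a relational-satisfaction question about a (smaller) term at a tree vertex, which by induction is handled by an already-constructed automaton. Once $\mathcal B_s$ and $\mathcal B_t$ are in hand, deciding $s = t$ amounts to deciding $L(\mathcal B_s) = L(\mathcal B_t)$, which is decidable by the classical theory of finite automata (determinise, complement, intersect, test emptiness). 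I would then conclude that the equational theory is decidable.

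\textbf{Main obstacle.} The delicate point — and the one I would spend the most care on — is the passage from ``accepted path inside \emph{some} pointed tree'' to ``accepted word by a finite-state automaton''. One must verify that it is legitimate to always pick the maximally-branching tree over a given word: that is, that if a condition $\D(c(s))$ is satisfiable at \emph{any} extension of the vertex, then attaching a canonical maximal witness subtree there does not interfere with satisfying the conditions at other vertices along the path (the conditions at distinct vertices constrain disjoint subtrees, so this should go through, but it needs to be stated and checked). Equally, one must make sure the recursion on the nesting depth of $\D$ (equivalently $\A$) in the state conditions actually terminates and that each level's ``satisfiability at a vertex with outgoing-label set $\Sigma'$'' is effectively computable — this is exactly the inductive hypothesis, so the structure of the argument is an induction on term complexity with the automata-equivalence decision procedure sitting at the base level. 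Getting the invariants of that induction right (what alphabet the auxiliary automata run over, and what exactly ``satisfiable at a vertex'' is pre-tabulated to mean) is the crux.
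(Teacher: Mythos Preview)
Your route and the paper's diverge after the step that builds condition automata $\mathcal A_s,\mathcal A_t$ from $s,t$. The paper does \emph{not} try to collapse these to ordinary NFAs. Instead it uses the complementation construction for condition automata (which necessarily introduces the antidomain operator $\A$) to obtain difference automata $\mathcal A_{s-t},\mathcal A_{t-s}$, converts those back to $\{\compo,\bjoin,\kstar,0,1,\A\}$-terms $\tau_{s-t},\tau_{t-s}$, and then translates each $\tau$ into a formula $\langle P(\tau)\rangle\top$ of propositional dynamic logic. Decidability then comes from the known decidability of PDL satisfiability together with the tree-model property: $s=t$ holds over relations iff both PDL formulas are unsatisfiable. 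So the tree-level combinatorics you are trying to control by hand are absorbed into the PDL decision procedure.

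Your direct NFA reduction has a genuine gap. The ``richest possible tree'' step is wrong as stated: validity of $s=t$ is a \emph{for all trees} statement, so checking only the maximal tree over each word loses information. Concretely, take $s=\D(a)$ and $t=1$. Both have empty root-to-point word, and in the maximal tree over the empty word both are satisfied, so your $\mathcal B_s$ and $\mathcal B_t$ would accept the same (empty) word; yet $s=t$ fails, witnessed by the single-vertex tree. Your fallback, an enlarged alphabet recording which conditions hold at each vertex, does not obviously work either: the condition at $v_i$ depends on the whole subtree below $v_i$, which includes the remainder of the root-to-point path and all side branches at later vertices, so ``outgoing-label set'' is insufficient, and you have not shown that the set of annotation sequences \emph{realisable by some tree} is a regular language (without that, NFA equivalence on the enlarged alphabet tells you nothing). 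The recursion you gesture at --- deciding satisfiability of the state conditions by a smaller instance of the same problem --- might be developable, but as written it is a hope rather than an argument. The paper's detour through $\A$-terms and PDL is precisely what lets one avoid having to analyse realisability directly.
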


\begin{proof}
Let $s$ and $t$ be $\{\compo, \bjoin, \kstar, 0, 1, \D\}$-terms, and let $\Sigma$ be the set of variables appearing in either $s$ or $t$. Now $s = t$ is valid in $\operatorname{Rel}(\compo, \bjoin, \kstar, 0, 1, \D)$ if and only if the pointed $\Sigma$-labelled rooted trees satisfying $s$ are precisely those satisfying $t$---we proved the stronger statement involving \emph{reduced} trees. In the equivalence just stated, we can temporarily use the usual graph-theoretic definition of a pointed $\Sigma$-labelled rooted tree; in particular we do not limit to finite trees. According to \cite[Proposition 5]{hellings2015relative}, there are (domain) condition automata $\mathcal A_s$ and $\mathcal A_t$ that accept precisely the \emph{finite} pointed trees satisfying $s$ and $t$ respectively. But in fact the finiteness condition plays no role, and hence can be dropped. The same remark can be made for the following statements and we will make no further mention of it. By \cite[Corollary 3]{hellings2015relative}, there are condition automata $\mathcal A_{s - t}$ and $\mathcal A_{t-s}$ such that a pointed tree is accepted by $\mathcal A_{s - t}$ precisely if it is accepted by $\mathcal A_s$ but not by $\mathcal A_t$, and a tree is accepted by $\mathcal A_{t - s}$ precisely if it is accepted by $\mathcal A_t$ but not $\mathcal A_s$. By \cite[Proposition 5]{hellings2015relative}, there exist $\{\compo, \bjoin, \kstar, 0, 1, \A\}$-terms $\tau_{s-t}$ and $\tau_{t-s}$ such that the pointed trees satisfying $\tau_{s-t}$ and $\tau_{t-s}$ are precisely those accepted by $\mathcal A_{s - t}$ and $\mathcal A_{t-s}$ respectively. Hence $s = t$ is valid in $\operatorname{Rel}(\compo, \bjoin, \kstar, 0, 1, \D)$ if and only if the sets of pointed $\Sigma$-labelled rooted trees satisfying $\tau_{s-t}$ and $\tau_{t-s}$ are both empty. Finally, we reduce the problem of deciding if a $\{\compo, \bjoin, \kstar, 0, 1, \A\}$-term $\tau$ is satisfiable by a pointed labelled rooted tree to the problem of deciding the satisfiability of a formula of propositional dynamic logic.\footnote{See, for example, \cite{FISCHER1979194}, for a description of propositional dynamic logic, including the notion of a \emph{regular frame}.} This latter problem is known to be decidable (in {\sf EXPTIME} \cite{PRATT1980231}), so then we are done. See \Cref{flow} for a summary of the transformations we have outlined.
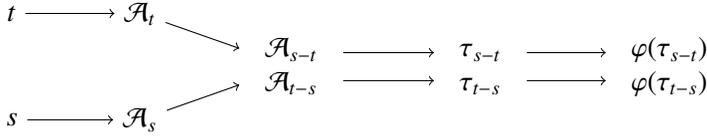
\begin{figure}[H]\centering
\begin{tikzpicture}[rotate=90]
\node (A) at (0.3, -.3) {$s$};
\node (B) at (1.7, -0.3) {$t$};
\node (C) at (0.3, -2) {$\mathcal A_s$};
\node (D) at (1.7, -2) {$\mathcal A_t$};
\node (E) at (1, -4) {\begin{tabular}{c}$\mathcal A_{s - t}$\\$\mathcal A_{t - s}$\end{tabular}};
\node (F) at (1, -6.5) {\begin{tabular}{c}$\tau_{s - t}$\\$\tau_{t - s}$\end{tabular}};
\node (G) at (1, -9) {\begin{tabular}{c}$\varphi(\tau_{s - t})$\\$\varphi(\tau_{t - s})$\end{tabular}};

\draw[->] (A) -- (C);
\draw[->] (B) -- (D);
\draw[->] (C) -- (E);
\draw[->] (D) -- (E);
\draw[->] ($(E.east) + (.5em, 0)$) -- ($(F.west) + (.5em, 0)$);
\draw[->] ($(E.east) - (.5em, 0)$) -- ($(F.west) - (.5em, 0)$);
\draw[->] ($(F.east) + (.5em, 0)$) -- ($(G.west) + (.5em, 0)$);
\draw[->] ($(F.east) - (.5em, 0)$) -- ($(G.west) - (.5em, 0)$);
\end{tikzpicture}
\caption{Transformations used to decide equality of $t$ and $s$}\label{flow}
\end{figure}
We define a translation from $\{\compo, \bjoin, \kstar, 0, 1, \A\}$-terms to (propositional varia\-ble-free) formulas of propositional dynamic logic as follows. First we define the translation $P$ from $\{\compo, \bjoin, \kstar, 0, 1, \A\}$-terms to program terms by structural induction as follows.
\begin{align*}
P(a) &\coloneqq a\\
P(s \compo t) &\coloneqq P(s) \compo P(t)\\
P(s \bjoin t) &\coloneqq P(s) \bjoin P(t)\\
P(t^*) &\coloneqq P(t)^*\\
P(0) &\coloneqq \bot?\\
P(1) &\coloneqq \top?\\
P(\A(t)) &\coloneqq (\neg P(t))?
\end{align*}
Then we simply define the translation $\varphi(t)$ of a $\{\compo, \bjoin, \kstar, 0, 1, \A\}$-term $t$ to be $\<P(t)\>\top$. It is clear that for any given regular frame, satisfiability of $\varphi(t)$ is equivalent to satisfiability of $t$ on the corresponding relational structure. Since propositional dynamic logic has the \emph{tree-model property} (any frame with selected point can be `unwound' to an equivalent labelled rooted tree), we obtain the equivalence of satisfiability of $\varphi(t)$ on regular frames and satisfiability of $t$ on tree-based relational structures. But if $t$ is satisfied by a pair $(x, y)$ of vertices in a tree-based relational structure, then clearly $y$ is a descendent of $x$, and $t$ is satisfied by the tree rooted at $x$ and having point $y$. (And conversely, satisfaction by a pointed rooted tree implies satisfaction by a tree-based relational structure.) Hence we have the required equivalence between satisfaction of $t$ with respect to pointed labelled rooted trees and satisfaction of $\varphi(t)$ with respect to regular frames.
\end{proof}

The procedure described in the proof of \Cref{decidability} hardly seems efficient. The best upper bound that can be obtained from it is a {$\sf 3EXPTIME$} bound. The constructions of $\mathcal A_{s - t}$ and $\mathcal A_{t-s}$ rely on determinisations involving a subset construction, so can result in an exponential increase in problem size. Likewise, construction of the terms $\tau_{s-t}$ and $\tau_{t-s}$ from automata can also add an exponent in general. Lastly, we already mentioned that the final step, deciding satisfiability of propositional dynamic logic formulas, is in {\sf EXPTIME}, and in fact this problem also has an exponential time lower bound (no $\mathcal O(2^{n^\varepsilon})$ algorithm for any $\varepsilon < 1$ \cite{FISCHER1979194}).

We finish with the obvious problem.

\begin{problem}
Determine the precise complexity of deciding validity of $\{\compo, \bjoin, \allowbreak\kstar,\allowbreak 0, 1, \D\}$-equations with respect to relational semantics.
\end{problem}

\appendix
\section{Appendix}

\begin{theorem}
The class $\operatorname{Rel}(\compo, \bjoin, \kstar, 0, 1)$ is not closed under elementary equivalence.
\end{theorem}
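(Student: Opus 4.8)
The plan is to exploit the fact that reflexive transitive closure is not first-order definable: I would show $\operatorname{Rel}(\compo, \bjoin, \kstar, 0, 1)$ is not closed under ultrapowers, which suffices since, by \Los's theorem, every ultrapower of a structure is elementarily equivalent to it. Concretely, let $\algebra A$ be the algebra of \emph{all} binary relations on $\mathbb N$, which is manifestly a member of $\operatorname{Rel}(\compo, \bjoin, \kstar, 0, 1)$. Let $a \in \algebra A$ be the successor relation $\{(n, n+1) : n \in \mathbb N\}$, write $a^i$ for the $i$-fold self-composition of $a$, and for $k \in \mathbb N$ abbreviate $e_k \coloneqq 1 \bjoin a \bjoin a^2 \bjoin \dots \bjoin a^k$, so that in $\algebra A$ we have $a^i \le e_k$ for $i \le k$, $e_k \lneq e_{k+1}$, $e_k \lneq a^*$, and no $e_k$ equals $a^*$. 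Fixing a nonprincipal ultrafilter $\mathcal U$ on $\mathbb N$ and forming the ultrapower $\algebra A^{\mathcal U}$, the goal is to prove $\algebra A^{\mathcal U} \notin \operatorname{Rel}(\compo, \bjoin, \kstar, 0, 1)$.

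The key element is $q \in \algebra A^{\mathcal U}$, the class of the sequence $(e_n)_{n \in \mathbb N}$. Using \Los's theorem together with the fact that a nonprincipal ultrafilter contains every cofinite subset of $\mathbb N$, I would record three facts holding in $\algebra A^{\mathcal U}$: (i) $e_k \le q$ for every $k \in \mathbb N$ (because $\{n : e_k \le e_n\} = \{n : n \ge k\}$ is cofinite); (ii) $q \le a^*$ (because $\{n : e_n \le a^*\} = \mathbb N$); and (iii) $q \ne a^*$ (because $\{n : e_n = a^*\} = \emptyset$). Then, for contradiction, suppose $\algebra A^{\mathcal U}$ is isomorphic to an algebra of binary relations on a set $X$ via an isomorphism $\theta$, and put $R \coloneqq \theta(a)$. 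Since in an algebra of binary relations $\kstar$ is interpreted as genuine reflexive transitive closure, $\theta(a^*) = R^* = \bigcup_{j \ge 0} R^j$, whereas $\theta(e_k) = 1_X \cup R \cup \dots \cup R^k \supseteq R^k$. Applying the order-preserving $\theta$ to (i) and (ii) gives $R^k \subseteq \theta(q) \subseteq R^*$ for every $k$, hence $R^* = \bigcup_k R^k \subseteq \theta(q) \subseteq R^*$, so $\theta(q) = R^* = \theta(a^*)$; as $\theta$ is injective this forces $q = a^*$, contradicting (iii). Therefore $\algebra A^{\mathcal U} \notin \operatorname{Rel}(\compo, \bjoin, \kstar, 0, 1)$, while $\algebra A^{\mathcal U} \equiv \algebra A \in \operatorname{Rel}(\compo, \bjoin, \kstar, 0, 1)$, which is the theorem.

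The only step carrying genuine content is the sandwiching in the second half of the previous paragraph: it isolates precisely the way the ultrapower outruns its representations. In $\algebra A^{\mathcal U}$ the element $a^*$ still dominates every standard finite power $a^i$ but now has the extra element $q$ lying strictly below it; once one passes to an honest algebra of relations, however, $\kstar$ is forced to be the infinitary union $\bigcup_j R^j$, which squeezes $\theta(q)$ between the $R^k$ and $R^*$ and collapses it onto $R^* = \theta(a^*)$ — exactly the collapse that (iii) rules out. Everything else is routine bookkeeping: checking (i)–(iii) via \Los's theorem, and noting that $\algebra A$ itself lies in $\operatorname{Rel}(\compo, \bjoin, \kstar, 0, 1)$.
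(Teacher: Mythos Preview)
Your proof is correct and follows essentially the same approach as the paper: both take the successor relation on $\mathbb N$, pass to a nonprincipal ultrapower, and use the element represented by the sequence $(1,\,1\bjoin a,\,1\bjoin a\bjoin a^2,\dots)$ to witness that $a^*$ is no longer the supremum of the finite powers. The paper phrases the conclusion as failure of the property $a^* = \sup_{i} a^i$ (which all algebras of binary relations satisfy), while you phrase it as a direct sandwiching contradiction under a putative representation, but the underlying argument is the same.
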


\begin{proof}
By the definition of $\kstar$ on relations, algebras in $\operatorname{Rel}(\compo, \bjoin, \kstar, 0, 1)$ validate $a^* = \operatorname{sup}_{i\in \mathbb N} a^i$ (with respect to the ordering $a \leq b \iff a + b = b$). Take any relation $R$ whose star $R^*$ differs from all finite approximants $\operatorname{sup}_{i <n} R^i$---for example take $R$ to be the immediate-successor relation in $\mathbb N$. Let $\algebra A$ be any algebra of relations containing $R$---for example the algebra generated by $R$. Let $U$ be any non-principal ultrafilter on $\mathbb N$, and let $\bar R$ be the element of the ultrapower ${\algebra A^\mathbb N} / {U}$ represented by the constant sequence $(R, R, \dots)$. Then $(\bar R)^*$ is represented by $(R^*, R^*, \dots)$, but this is not a supremum for $\{R^i \mid i \in \mathbb N\}$ since the strictly smaller element represented by $(1, {1+ R}, 1 + R + R^2, \dots)$ is also an upper bound. Hence the ultrapower does not validate $a^* = \operatorname{sup}_{i\in \mathbb N} a^i$, so $\operatorname{Rel}(\compo, \bjoin, \kstar, 0, 1)$ is not closed under ultrapowers. It follows by \L o\'s's theorem \cite{LOS195598} that $\operatorname{Rel}(\compo, \bjoin, \kstar, 0, 1)$ cannot be closed under elementary equivalence.
\end{proof}

\bibliographystyle{amsplain}
\bibliography{../brettbib}

\end{document}